\documentclass[11pt]{amsart}
\usepackage{graphicx}
\usepackage{epstopdf}
\usepackage{url}
\usepackage{latexsym}
\usepackage{amsfonts,amsmath,amssymb}
\newtheorem{theorem}{Theorem}[section]
\newtheorem{proposition}[theorem]{Proposition}
\newtheorem{lemma}[theorem]{Lemma}

\newtheorem{corollary}[theorem]{Corollary}




\def\cal{\mathcal}

\date{\today}

\begin{document}

\title[Balanced vertices in  rooted  trees]{Balanced vertices in labeled rooted trees}

\author{Mikl\'os B\'ona}
\address{Department of Mathematics, University of Florida, $358$ Little Hall, PO Box $118105$,
Gainesville, FL, $32611-8105$ (USA)}
\email{bona@ufl.edu}

\begin{abstract}
In a rooted tree, we call a vertex {\em balanced} if it is at equal distance from all its descendant leaves. We
count balanced vertices in three different tree varieties. For decreasing binary trees, we can prove that the probability
that a vertex chosen uniformly at random from the set of all trees of a given size is balanced is monotone decreasing. 
\end{abstract}

\maketitle

\section*{Introduction}
Various parameters of many models of random rooted trees are fairly well understood {\em if they relate to a near-root part of the tree or to global tree structure}.
 The first group includes the numbers of vertices at given distances from the root, the immediate progeny sizes
for vertices near the top, and so on. See \cite{flajolet} for a comprehensive treatment of these results.

Not surprisingly, the technical details of fringe analysis become quite complex as soon
as the focus shifts to layers of vertices further away from the leaves. So while there are
explicit results on the (limiting) fraction of vertices at a fixed, small distance from the
leaves, an asymptotic behavior of this fraction, as a function of the distance, remained an open problem.
Recently, Boris Pittel and the present author have studied this family of questions in \cite{pittel}. 
Most work in fringe analysis focused on {\em decreasing binary trees}, which are also called {\em binary search trees}. We will explain the reason for that. However,
in this paper, we will discuss questions that we can successfully investigate for other tree varieties as well.

We call a vertex $v$ of a rooted tree {\em balanced} if all descending paths from $v$ to a leaf have the same length. In other
words, if $\ell$ is any leaf that is a descendant of $v$, then the unique path from $v$ to $\ell$ consists of $k$ edges, 
where $k$ does not depend on the choice of $\ell$. The number $k$ is called the {\em rank} of $v$.

\section{Decreasing binary trees}
A decreasing binary tree on vertex set $[n]=\{1,2,\cdots ,n\}$ is a binary plane tree in which every vertex has a smaller label
than its parent. This means that the root must have label $n$, every vertex has at most two 
children, and that every child $v$ is either an left child or a right child of its parent, even if $v$ is the {\em only} child of its parent.

Decreasing binary trees on vertex set $[n]$ are in bijection with permutations of $[n]$. In order to see this, 
let $\pi=\pi_1\pi_2\cdots \pi_n$ be a permutation. The  decreasing binary tree
 of $\pi$, which
we denote by $T(\pi)$, is defined as follows. The root of $T(\pi)$ is a vertex
 labeled $n$, the largest entry of $\pi$. 
If $a$ is the largest entry of $\pi$ on the left of $n$, and $b$ is the largest
entry of $\pi$ on the right of $n$, then the root will have two children,
the left one will be labeled $a$, and the right one will  be labeled $b$. If $n$ is
the first (resp. last) entry of $\pi$, then the root will have only one child,
and that is a left  (resp. right) child, and it will necessarily be
labeled $n-1$ as $n-1$ must be the largest of all remaining elements.
Define the rest of $T(\pi)$ recursively,  by taking $T(\pi')$ and $T(\pi'')$, where
$\pi'$ and $\pi''$ are the substrings of $\pi$ on the two sides of $n$, and affixing
them to $a$ and $b$. See Figure \ref{permtree} for an illustration. 

\begin{figure} \label{permtree}
\begin{center}
  \includegraphics[width=60mm]{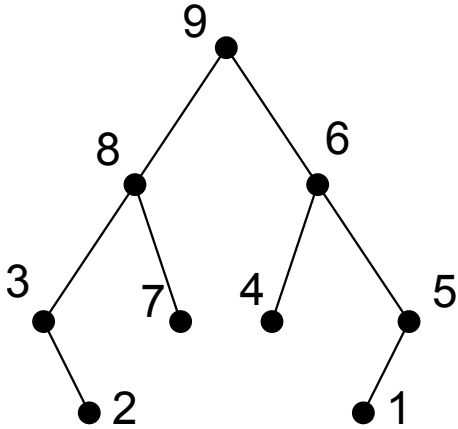}
  \caption{The tree $T(\pi)$ for $\pi=328794615$.}
  \end{center}
 \end{figure}

Note that in the tree $T(\pi)$ shown in Figure \ref{permtree}, all vertices are balanced, except 8, 9, and 6. Also note that 
in any decreasing binary tree, vertex $v$ can be balanced only if all its descendants are balanced. 

\subsection{Balanced vertices of a fixed rank} \label{fixedsection} Recall that a vertex $v$ is {\em balanced} if all descending
paths from $v$  to a leaf have the same length. That length is the rank of $v$.
Let $a_{n,k}$ be the total number of all balanced vertices of rank $k$ in all decreasing binary trees of size $n$. 
Let $A_k(x)=\sum_{n\geq 0} a_{n,k}\frac{x^n}{n!}$, and  let $r_{n,k}$ be the total number of such trees on $n$ vertices
whose {\em root} is balanced of rank $k$. 

\begin{proposition}
The differential equation
\begin{equation} \label{recurrence} 
A_k'(x)=2\frac{A_k(x)}{1-x} + R_k'(x)\end{equation} 
holds, with the initial condition $A_k(0)=0$.  
\end{proposition}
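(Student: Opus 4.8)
The plan is to set up a combinatorial decomposition of decreasing binary trees based on the structure at the root, then translate this into the claimed differential equation via the symbolic method for exponential generating functions.

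First I would recall the basic EGF machinery for decreasing binary trees. The key fact is that decreasing binary trees are labeled structures where the root carries the maximal label, and removing the root splits the remaining $n-1$ vertices into a left subtree and a right subtree, each of which is again a decreasing binary tree (on a relabeled vertex set). The relevant operation on EGFs is the following: if $T(x)$ is the EGF for all decreasing binary trees by number of vertices, then the root-decomposition yields $T'(x) = (1+T(x))^2$, reflecting the choice of a (possibly empty) left and right subtree, with the derivative accounting for the distinguished root. The factor $1/(1-x)$ that appears in the claimed equation should come from the fact that the EGF for \emph{all} decreasing binary trees is $T(x) = 1/(1-x)$ minus a constant, i.e. $1 + T(x) = 1/(1-x)$, so that $1+T(x)$ equals $1/(1-x)$. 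I would verify this so that the coefficient $2/(1-x)$ in \eqref{recurrence} can be recognized as $2(1+T(x))$.

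Next I would classify balanced vertices of rank $k$ according to their location in the tree. A balanced vertex of rank $k$ in a tree $\mathcal{T}$ is either the root of $\mathcal{T}$ itself, or it lies in the left subtree, or it lies in the right subtree. The first case contributes exactly $R_k$, the count of trees whose root is balanced of rank $k$; this is where the term $R_k'(x)$ enters (with a derivative, since we are differentiating the EGF relation with respect to the root label). The second and third cases say that a balanced vertex of rank $k$ strictly below the root is a balanced vertex of rank $k$ in one of the two subtrees, paired arbitrarily with the other subtree (which ranges over all decreasing binary trees). In EGF terms, marking such vertices in the left subtree and letting the right subtree be arbitrary gives a contribution $A_k(x)\cdot(1+T(x))$, and symmetrically for the right subtree, yielding $2A_k(x)(1+T(x)) = 2A_k(x)/(1-x)$ after applying the root derivative. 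Summing the three cases produces precisely $A_k'(x) = 2A_k(x)/(1-x) + R_k'(x)$.

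The main obstacle, and the step requiring the most care, is the bookkeeping of the derivatives and the precise placement of the $R_k'(x)$ term. Because these are EGFs and the root-removal operation corresponds to differentiation, I must be careful that the decomposition is stated at the level of \emph{derivatives} of the generating functions (i.e. counting trees with a distinguished largest label being peeled off) rather than at the level of the functions themselves; getting this right is what produces $A_k'$ on the left and $R_k'$ rather than $R_k$ on the right. I would therefore phrase the decomposition as an identity counting the ways to build a tree on $[n]$ by designating the root and attaching two subtrees, and only afterward pass to EGFs, checking that differentiation is applied uniformly. The initial condition $A_k(0)=0$ is immediate, since there are no vertices (hence no balanced vertices of any rank) in the empty tree, and I would note this to complete the statement.
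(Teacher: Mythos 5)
Your proposal is correct and follows essentially the same argument as the paper: decompose according to whether the marked balanced vertex of rank $k$ is the root (yielding $R_k'(x)$ after root removal) or lies in one of the two subtrees (yielding $2A_k(x)\cdot\frac{1}{1-x}$, with the factor $2$ for left versus right and $\frac{1}{1-x}$ as the EGF of all, possibly empty, decreasing binary trees), with the derivative on the left accounting for the removed maximal-label root. The bookkeeping of derivatives that you flag as the delicate point is handled in the paper exactly as you describe.
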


\begin{proof}
Note that $a_{n,k}$ is the number of ordered pairs $(v,T)$, where $v$ is a balanced vertex of rank $k$ in a decreasing
binary tree $T$ on vertex set $[n]$, in other words, $a_{n,k}$ is the number of decreasing binary trees on $[n]$ with a
balanced vertex of rank $k$ marked. If the marked vertex $v$ is not the root of $T$, then removing the root of $T$, we
get, on the one hand, a structure on $[n-1]$ that is counted by $A_k'(x)$, and, on the other hand, a decreasing binary
tree and a decreasing binary tree with a balanced vertex of rank $k$ marked. By the Product formula of exponential generating functions, these pairs of trees are counted by the generating function $2 \cdot \frac{1}{1-x} A_k(x)$. 
The factor 2 is needed since the order of the obtained two trees matters, and $1/(1-x)$ is just the generating function 
of the sequence of factorials, hence of the sequence enumerating decreasing binary trees. Finally, if the marked vertex $v$
is the root of tree, then removing it we just get a structure enumerated by $R_k'(x)$. 
\end{proof}

The special case of $k=0$ counts {\em leaves}, which are of rank 0, and are trivially balanced. Indeed, we have
$R_0(x)=x$, so $R_0'(x)=1$. Therefore \eqref{recurrence} reduces to 
\[A_0'(x)=2\frac{A_0(x)}{1-x} + 1,\] with $A_0(0)=0$. The solution of this differential equation is 
indeed \[A_0(x)=\frac{1}{3} \cdot \frac{1}{(1-x)^2} + \frac{x}{3} -\frac{1}{3},\]
which is the generating function for the number of all leaves of all decreasing binary trees on $n$ vertices. 

If $k=1$, then $R_1(x)=x^2+ \frac{x^3}{3}$, since both trees on two vertices, and both trees on three vertices in which
the root has two children, have a root that is balanced of rank 1. So \eqref{recurrence} reduces to 
 \[A_1'(x)=2\frac{A_1(x)}{1-x} + 2x+ x^2,\] with $A_1(0)=0$. The solution of this initial value problem is
\[A_1(x)=\frac{\frac{1}{5}x^5 - x^3 +x^2}{(1-x)^2}.\] 
Further generating functions $A_k(x)$ can theoretically be computed, but one runs out of computing power very fast. 
A crucial difference from earlier work such as \cite{pittel} is that for all $k$, the generating function $R_k$ is a {\em polynomial function}, since if the root of a tree is balanced and is of rank $k$, then that tree cannot have more than $2^k-1$ vertices. We are now going to show that this implies that
for all $k$, the generating function $A_k(x)$ is always a {\em rational function} of denominator $(1-x)^2$. 
\begin{corollary} Let $k$ be a fixed nonnegative integer.  Let $c_{n,k}=\frac{a_{n,k}}{n! \cdot n}$ be the probability that a  vertex chosen uniformly from the set of all $n\cdot n!$ vertices of all decreasing binary trees on $[n]$ is balanced, and is of rank $k$. Then for any fixed $k$, the limit 
\[c_k=\lim_{n\rightarrow \infty} c_{n,k} \]
exist. Furthermore,  Let $A_k(x)=\frac{P_k(x)}{(1-x)^2}$, that is, let $P_{k}(x)$ be the numerator of $A_k(x)$.
Then  $c_k=P_k(1)$.
\end{corollary}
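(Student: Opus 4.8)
The plan is to first confirm that $A_k(x)$ has the announced form $P_k(x)/(1-x)^2$ with $P_k$ a polynomial, and then to read the asymptotics of $a_{n,k}$ directly off the resulting series expansion.

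For the first part, I would treat \eqref{recurrence} as a first-order linear ODE, $A_k'(x) - \frac{2}{1-x}A_k(x) = R_k'(x)$, and solve it with the integrating factor $\mu(x) = (1-x)^2$, which satisfies $\mu'/\mu = -2/(1-x)$. Multiplying through turns the left side into an exact derivative, $\frac{d}{dx}\bigl[(1-x)^2 A_k(x)\bigr] = (1-x)^2 R_k'(x)$, so that, using the initial condition $A_k(0)=0$,
\[
(1-x)^2 A_k(x) = \int_0^x (1-t)^2 R_k'(t)\,dt =: P_k(x).
\]
Here the crucial input is that $R_k$ is a polynomial, because a tree whose root is balanced of rank $k$ has at most $2^k-1$ vertices; hence the integrand is a polynomial, and so is $P_k$. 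This establishes $A_k(x)=P_k(x)/(1-x)^2$ with $P_k$ a polynomial satisfying $P_k(0)=0$.

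For the second part, I would expand the denominator via $1/(1-x)^2 = \sum_{m\ge 0}(m+1)x^m$. Writing $P_k(x)=\sum_{i} p_i x^i$ with $\deg P_k = d$, for every $n \ge d$ the coefficient extraction gives $a_{n,k}/n! = [x^n]A_k(x) = \sum_i p_i(n-i+1) = (n+1)P_k(1) - P_k'(1)$, an exact affine function of $n$ once $n$ exceeds $d$. Dividing by $n$ then yields, for all large $n$,
\[
c_{n,k} = \frac{a_{n,k}}{n!\cdot n} = P_k(1) + \frac{P_k(1)-P_k'(1)}{n},
\]
so the limit $c_k = \lim_{n\to\infty} c_{n,k}$ exists and equals $P_k(1)$, as claimed.

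The computations here are entirely routine; the only point demanding genuine care is the justification that $P_k$ is a \emph{polynomial}, which rests squarely on the finiteness statement that $R_k$ is a polynomial (itself a consequence of the $2^k-1$ vertex bound). Everything downstream is a single integration and a standard coefficient extraction, so I expect no real obstacle beyond the bookkeeping needed to track $\deg P_k$ and confirm that the affine formula for $a_{n,k}/n!$ holds for all sufficiently large $n$.
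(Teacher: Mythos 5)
Your proposal is correct and follows essentially the same route as the paper: solve the linear ODE with the integrating factor $(1-x)^2$, use the polynomiality of $R_k$ to conclude $P_k$ is a polynomial, and then extract coefficients from $P_k(x)/(1-x)^2$. The only cosmetic difference is that the paper reads off $a_{n,k}/n!$ via the decomposition $P_k(x)=P(x)(1-x)^2+Q(1-x)+R$, while you convolve directly with $\sum_{m\ge 0}(m+1)x^m$ to get $(n+1)P_k(1)-P_k'(1)$; these yield the identical affine formula.
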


Note that the fact that the limits $c_k$ exist can also be proved using the techniques of {\em additive functionals} as
explained in \cite{holmgren}. (The number of balanced vertices in a rooted tree is an additive functional.)  However, we provide a self-contained proof here.

\begin{proof}
Solving the linear differential equation (\ref{recurrence}) for $A_k(x)$, we get that
\begin{equation} \label{diffeqsol} A_k(x)=\frac{\int (1-x)^2B_k'(x) \ dx }{(1-x)^2} ,\end{equation}
where the constant of integration is to be chosen so that the initial condition $A_k(0)=0$ is satisfied. 
Recall that $B_k(x)$, and therefore, $B_k'(x)$, is a polynomial function. Therefore, the numerator $P_k(x)$ of the right-hand side
of (\ref{diffeqsol}) is a polynomial function. 

Now let $P_k(x)=P(x)(1-x)^2 + Q(1-x)+R$, where $P(x)$ is a polynomial, and $Q$ and $R$ are complex numbers. 
Note that $P_k(1)=R$. Then
\[A_k(x)=\frac{P_k(x)}{(1-x)^2} = P(x) +\frac{Q}{1-x} +\frac{R}{(1-x)^2}.\] If $n$ is larger than the degree of the
polynomial $P$, then equating coefficients of $x^n$ on both sides, we get that
\[\frac{a_{n,k}}{n!}= Q + (n+1)R.\]
So \[\lim_{n\rightarrow \infty} c_{n,k} = \lim_{n\rightarrow \infty} \frac{a_{n,k}}{n! \cdot n} = \lim_{n\rightarrow \infty} \frac{a_{n,k}}{(n+1)!}= R=P_k(1). \]
\end{proof}

Note that it is easy to see that $c_k>0$ for all $k$. Indeed, $c_k$ is certainly at least as large as the probability that a 
randomly selected vertex is of rank $k$ and is the root of a {\em perfect} binary tree (one in which every non-leaf vertex has exactly 
two children), and even this last probability stays above a positive constant as $n$ goes to infinity. See \cite{protected} 
for details. 

The simple form of $A_k(x)$ is the reason that decreasing binary trees are easier to analyze from this aspect than other tree varieties.
Using the above corollary, we get that $c_0=1/3$, $c_1=1/5$,  $c_2=52/567$ and $c_3=7175243/222660900$.
This shows that for large $n$,  about 65.7 percent of all vertices of decreasing binary trees are balanced and of rank at most three. 
More computation shows that for $n$ sufficiently large, about  66.62 percent of all vertices are balanced and of rank at most four, and about 66.84 percent are
balanced and of rank at most five. 

\subsection{A result about monotonicity}
As decreasing binary trees are in bijection with permutations, we have additional tools analyzing them. This enables 
to prove the following strong result. We could not prove similar results for other labeled rooted trees.

\begin{theorem} \label{decreasing} Let $P_n$ be the probability that a vertex chosen uniformly at random from the set of all vertices of all decreasing binary trees on $[n]$  is balanced.  Then the sequence $P_1,P_2,\cdots $ is weakly decreasing.
\end{theorem}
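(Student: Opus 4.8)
The plan is to work directly with permutations, exploiting the bijection $\pi \mapsto T(\pi)$. Since $P_n$ is the expected fraction of balanced vertices in a random decreasing binary tree on $[n]$, and there are $n!$ such trees each with $n$ vertices, we have $P_n = \frac{1}{n \cdot n!}\sum_k a_{n,k}$, where $a_{n,k}$ is as defined in the excerpt. So proving $P_n \geq P_{n+1}$ amounts to comparing the total count of balanced vertices, suitably normalized, across consecutive sizes.

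Let me think about how to set this up carefully. Let $b_n = \sum_k a_{n,k}$ be the total number of balanced vertices over all decreasing binary trees on $[n]$. Then $P_n = b_n/(n \cdot n!)$, and the claim $P_n \geq P_{n+1}$ becomes
\begin{equation*}
\frac{b_n}{n \cdot n!} \geq \frac{b_{n+1}}{(n+1)\cdot (n+1)!},
\end{equation*}
i.e. $b_{n+1} \leq (n+1)^2 \, b_n$. The natural approach is an injective/combinatorial argument.

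**First**, I would look for a size-increasing map from the $[n]$-world to the $[n+1]$-world. The cleanest tool is the operation of inserting a new largest element. Given a decreasing binary tree on $[n]$ with a marked balanced vertex, one wants to produce trees on $[n+1]$ with a marked balanced vertex. In the permutation picture, inserting $n+1$ into one of the $n+1$ gaps of $\pi \in S_n$ gives all of $S_{n+1}$ exactly once; so each tree on $[n]$ lifts to $n+1$ trees on $[n+1]$. The key question is how inserting the new maximum affects the balanced status of existing vertices and whether it creates new balanced vertices. Inserting $n+1$ makes it the new root with the old tree hanging off one side (and possibly a single new leaf on the other side when $n+1$ lands at an end of $\pi$); crucially, a vertex $v$ that was balanced in $T(\pi)$ stays balanced after the insertion, because the subtree rooted at $v$ is preserved unless $v$ is an ancestor of the insertion point. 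The surgery therefore either preserves balanced vertices or destroys the balance of a chain of ancestors above the insertion site.

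**The heart of the argument** will be to track these changes quantitatively. I expect the main obstacle to be bounding the creation of \emph{new} balanced vertices and the destruction of old ones under the $(n+1)$-fold insertion: one must show that, summed over all $n+1$ insertion positions, the total balanced count grows by a factor of at most $(n+1)^2$ rather than something larger. A promising route is to fix a vertex $v$ in $T(\pi)$ and ask, over the $n+1$ insertions, in how many of the resulting trees the image of $v$ is balanced; insertions below $v$ or in a different subtree leave $v$ balanced, while insertions along the ancestral path from the root down to $v$'s subtree are the only ones that can unbalance it, and these number at most the depth of $v$. Controlling this depth-dependent loss, and separately accounting for the newly created leaf (rank $0$, always balanced) and any new balanced vertex at the insertion site, should yield the inequality $b_{n+1} \leq (n+1)^2 b_n$. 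The subtle point, and where I would spend the most care, is that a vertex can \emph{become} balanced when $n+1$ is inserted as its new child-leaf, so the bookkeeping must be a genuine two-sided comparison rather than a one-line injection.
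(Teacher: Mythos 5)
There is a genuine gap, and it begins with the very first reduction. From $P_n=b_n/(n\cdot n!)$ and $P_{n+1}=b_{n+1}/((n+1)\cdot(n+1)!)$, the inequality $P_n\ge P_{n+1}$ is equivalent to $b_{n+1}\le \frac{(n+1)\,(n+1)!}{n\cdot n!}\,b_n=\frac{(n+1)^2}{n}\,b_n$, not to $b_{n+1}\le (n+1)^2 b_n$ as you wrote; you have lost a factor of $n$. This is not a cosmetic slip: your stated target is \emph{trivially} true (every tree has at least one balanced vertex, namely a leaf, so $b_n\ge n!$, while $b_{n+1}\le (n+1)\cdot(n+1)!=(n+1)^2\,n!$), so if your reduction were correct the theorem would be a one-liner. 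The correct target $b_{n+1}\le\frac{(n+1)^2}{n}b_n$ is asymptotically an \emph{equality} (since $P_{n+1}/P_n\to 1$), which means the insertion bookkeeping you defer to the end cannot be done with any slack: the balanced vertices created and destroyed by the surgery must cancel almost exactly, and ``controlling this depth-dependent loss'' by crude bounds will not close the argument. That bookkeeping is precisely where the whole difficulty of the theorem lives, and it is left entirely undone.

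A second, related problem is that the surgery itself is misdescribed. Inserting $n+1$ into a gap of $\pi$ makes $n+1$ the new \emph{root}, with the trees of the prefix and the suffix of $\pi$ as its two subtrees; the old tree ``hangs off one side'' only for the two end gaps, and for an interior gap every vertex on the path from the old root down to the lowest common ancestor of the two entries flanking the gap has its subtree truncated. Your later phrase ``when $n+1$ is inserted as its new child-leaf'' describes a different growth process (adjoining a new \emph{minimum} as a leaf), and the two are conflated in the sketch. For comparison, the paper avoids coupling altogether: it conditions on the size of the root's left subtree to get $P_n=\bigl(p_n+\sum_{i=1}^{n-1}\tfrac{2iP_i}{n}\bigr)/n$, proves separately by induction (with a careful matching of summands after discarding the smallest one) that the root-balance probabilities $p_{n,k}$ are decreasing in $n$, and finishes with an averaging argument. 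If you want to pursue the injection/coupling route you would need to fix the target inequality, commit to one growth process, and carry out an exact two-sided account of gained versus lost balanced vertices; as it stands the proposal does not constitute a proof.
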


Let $p_{n,k}$ be  the probability that the {\em  root }of a randomly selected tree on $n$ vertices is balanced, and is of rank $k$. Set $p_{0,i}=1$ for all $i$.  We start by an inequality for the numbers $p_{n,k}$ for fixed $k$. 

\begin{lemma} \label{fixedk}
For all $n\geq 1$ and all fixed $k\leq n$, the inequality $p_{n+1,k} \leq p_{n,k}$ holds. 
\end{lemma}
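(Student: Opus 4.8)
The plan is to prove the slightly stronger statement that for each fixed $k$ the entire sequence $(p_{n,k})_{n\ge 0}$ is weakly decreasing, and to proceed by induction on $k$. The engine is the root decomposition: if the root of a decreasing binary tree is balanced of rank $k\ge 1$, then deleting it leaves an ordered pair of subtrees each of which is empty or has a root balanced of rank $k-1$, and conversely any such ordered pair reassembles (with the largest label at the root) into a tree whose root is balanced of rank $k$. The convention $p_{0,i}=1$ is exactly what collapses the empty/one-child cases into a single convolution, so that
\[
n\,p_{n,k}=\sum_{i=0}^{n-1}p_{i,k-1}\,p_{n-1-i,k-1},\qquad n\ge 1,\ k\ge 1.
\]
The base case $k=0$ is immediate, since $p_{0,0}=p_{1,0}=1$ and $p_{n,0}=0$ for $n\ge 2$, so $(p_{n,0})_n$ is weakly decreasing.

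For the inductive step, abbreviate $q_i:=p_{i,k-1}$, so that $q_0\ge q_1\ge q_2\ge\cdots\ge 0$ by the inductive hypothesis. Writing $D_m:=\sum_{i=0}^{m}q_iq_{m-i}$, the recurrence reads $p_{n,k}=D_{n-1}/n$, and therefore $p_{n+1,k}\le p_{n,k}$ is equivalent to $n\,D_n\le (n+1)\,D_{n-1}$, i.e. to the assertion that the normalized self-convolution
\[
\bar D_m:=\frac{1}{m+1}\sum_{i=0}^{m}q_iq_{m-i}
\]
is weakly decreasing in $m$. This reduces the lemma to a purely analytic statement about one weakly decreasing nonnegative sequence, and that is where the real content lies.

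The main obstacle is proving that $\bar D_m$ is nonincreasing. The cleanest route is to read off the answer from the continuous model, where it is transparent: if $f\ge 0$ is nonincreasing then $g(t):=\tfrac1t\int_0^t f(s)f(t-s)\,ds$ is nonincreasing, because the substitution $s=tu$ gives $g(t)=\int_0^1 f(tu)f(t(1-u))\,du$, and differentiating under the integral sign and symmetrizing in $u\leftrightarrow 1-u$ yields $g'(t)=2\int_0^1 u\,f'(tu)\,f(t(1-u))\,du\le 0$ since $f'\le 0$. The plan is to transcribe this computation into the discrete setting: after summation by parts, $\bar D_m-\bar D_{m+1}$ should be expressible as a nonnegative combination of the backward differences $q_i-q_{i+1}\ge 0$, the monotonicity of $q$ being precisely what forces every coefficient to be nonnegative. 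Carrying out this bookkeeping, while juggling the two different normalizations $1/(m+1)$ and $1/(m+2)$ as one pairs terms of $D_m$ against terms of $D_{m+1}$, is the one genuinely delicate calculation, and I would isolate it as a standalone lemma on self-convolutions of monotone sequences.

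Finally, the restriction $k\le n$ in the statement is harmless. A parallel induction shows $p_{n,k}=1$ whenever $n\le k+1$, so for $n\le k$ both sides of the desired inequality equal $1$ and it holds trivially; for the remaining values the convolution estimate above applies. Hence $(p_{n,k})_{n\ge 0}$ is weakly decreasing for every fixed $k$, which yields $p_{n+1,k}\le p_{n,k}$ for all $n\ge 1$, completing the proof.
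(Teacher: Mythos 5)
Your reduction is correct and, structurally, arguably cleaner than the paper's: you induct on $k$ with the hypothesis that the whole sequence $(p_{i,k-1})_i$ is weakly decreasing, derive the convolution recurrence $np_{n,k}=\sum_{i=0}^{n-1}p_{i,k-1}p_{n-1-i,k-1}$, and correctly identify that everything hinges on the claim that $\bar D_m=\frac{1}{m+1}\sum_{i=0}^m q_iq_{m-i}$ is nonincreasing whenever $q_0\ge q_1\ge\cdots\ge 0$. The problem is that you never prove that claim. The continuous computation is only a heuristic, and ``after summation by parts, $\bar D_m-\bar D_{m+1}$ should be expressible as a nonnegative combination of the backward differences'' is a prediction about the outcome of a calculation you did not perform. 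Since that inequality is the entire content of the lemma, the proof as written has a genuine hole at its center.

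The hole is fillable, and your continuous model in fact tells you exactly how: the variable $u$ discretizes to $i/(m+1)$. Write each summand of $D_{m+1}$ as a convex combination and use monotonicity twice:
\[
q_iq_{m+1-i}=\frac{i}{m+1}\,q_iq_{m+1-i}+\frac{m+1-i}{m+1}\,q_iq_{m+1-i}\le\frac{i}{m+1}\,q_{i-1}q_{m+1-i}+\frac{m+1-i}{m+1}\,q_iq_{m-i}.
\]
Summing over $0\le i\le m+1$ (the coefficients vanish exactly at the two indices where the dominating factors would be undefined), each product $q_lq_{m-l}$ is collected with total weight $\frac{l+1}{m+1}+\frac{m+1-l}{m+1}=\frac{m+2}{m+1}$, giving $D_{m+1}\le\frac{m+2}{m+1}D_m$, which is precisely $\bar D_{m+1}\le\bar D_m$. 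For comparison, the paper closes the same gap by a different device: it deletes the \emph{minimal} summand from the longer convolution, matches the remaining summands injectively with dominating summands of the shorter one (shifting the index by one on one side of the deleted term), and then invokes the fact that the minimum of the $m+2$ summands is at most their average; that matching is data-dependent (it depends on where the minimum sits), whereas the convex-combination identity above is a fixed algebraic decomposition. Either way, one of these arguments must actually be supplied for your proof to stand.
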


\begin{proof}
We prove the statement by induction on $n$. The statement is true for all $k$ if $n\leq 3$, since in that case, $p_{n,k}=1$
for all $n$ and all $k$. Now let us assume that the statement is true for $n$ and prove it for $n+1$. 

Let $\pi$ be a permutation of length $n+1$. The probability that the largest entry of $\pi$ is in position $i+1$ for 
any $i\in [0,n]$ is $1/(n+1)$.  The root of $T(\pi)$ is balanced of rank $k$ if and only if all its children are balanced of
rank $k-1$, so 
\begin{equation} \label{recfor(n+1)}
p_{n+1,k} =\frac{ \sum_{i=0}^{n} p_{i,k-1}p_{n-i,k-1}}{n+1} .
\end{equation}
Replacing $n+1$ by $n$, we get the analogous formula

\begin{equation} \label{recfor(n)}
p_{n,k} =\frac{\sum_{i=0}^{n-1} p_{i,k-1}p_{n-1-i,k-1} } {n} .
\end{equation}

The initial difficulty is that while the summands in (\ref{recfor(n+1)}) are smaller than their counterparts in 
(\ref{recfor(n)}), there is one more of them. 
The crucial observation is that if we remove the {\em smallest} summand from the numerator of (\ref{recfor(n+1)}), then
the remaining $n$ summands of that numerator can be matched with the $n$ summands of the numerator of (\ref{recfor(n)}), so that
in each pair, the summand coming from  (\ref{recfor(n)}) is at least as large as the summand coming from  (\ref{recfor(n+1)}). That will prove that the  numerator of (\ref{recfor(n+1)}) is at most $(n+1)/n$ times as large as
the numerator of (\ref{recfor(n)}).

Let $j$ be the index for which $p_{j,k-1}p_{n-j,k-1}$ is minimal, so that last product is the minimal summand in the numerator of (\ref{recfor(n+1)}). First we look at indices smaller than $j$. 
Note that by the induction hypothesis, $p_{n-i,k-1} \leq p_{n-1-i,k-1}$, so 
\begin{equation} \label{ineqfori}  p_{i,k-1}p_{n-i,k-1} \leq p_{i,k-1}p_{n-1-i,k-1}. \end{equation} 
Let us sum these inequalities for $0\leq i\leq j-1$, to get 

\begin{equation} \label{first}
 \sum_{i=0}^{j-1} p_{i,k-1}p_{n-i-1,k-1} \leq    \sum_{i=0}^{j-1} p_{i,k-1}p_{n-1-i,k-1} .
\end{equation}

Now we consider indices larger than $j$. Again, by the induction hypothesis,  $p_{i,k-1}     \leq       p_{i-1,k-1} $, so
\begin{equation}         p_{i,k-1}      p_{n-i,k-1}   \leq       p_{i-1,k-1}      p_{n-i,k-1}    .            \end{equation} 
Let us sum these inequalities for $i\in [j+1,n]$, to get 

\begin{equation} \label{second}
 \sum_{i=j+1}^{n-1} p_{i,k-1}p_{n-i,k-1} \leq    \sum_{i=j+1}^{n-1} p_{i-1,k-1}p_{n-i,k-1} .
\end{equation}

Adding (\ref{first}) and (\ref{second}), we get an inequality whose right-hand side agrees with the sum in (\ref{recfor(n)}), 
and whose left-hand side is the sum in (\ref{recfor(n+1)}), {\em except} the summand of the latter indexed by $j$. However, that summand was the {\em smallest} of the $(n+1)$ summands in the sum in (\ref{recfor(n+1)}), which implies that 

\[ np_{n+1,k} = \frac{n}{n+1}  \sum_{i=0}^{n} p_{i,k-1}p_{n-i,k-1} \leq \sum_{i=0}^{n-1} p_{i,k-1}p_{n-1-i,k-1} =np_{n,k}.\]
This is equivalent to our claim. 
\end{proof}

\begin{corollary} \label{roots}  Let $p_n$ be the probability that the root of  a decreasing binary tree on $[n]$   is balanced.
Then $p_n\geq p_{n+1}$.
\end{corollary}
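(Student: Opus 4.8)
The plan is to deduce the corollary from Lemma \ref{fixedk} by summing over all possible ranks. Since a balanced vertex has a single well-defined rank, the events ``the root is balanced of rank $k$'', as $k$ ranges over the nonnegative integers, are pairwise disjoint, and their union is exactly the event ``the root is balanced.'' Hence $p_n=\sum_{k\ge 0}p_{n,k}$, and this is a finite sum: a root of rank $k$ forces at least $k+1$ vertices, so $p_{n,k}=0$ as soon as $k\ge n$. I would first record this decomposition both for $n$ and for $n+1$.

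With the decomposition in hand, the argument is a one-line consequence of the lemma. Lemma \ref{fixedk} supplies the termwise domination $p_{n+1,k}\le p_{n,k}$ for each fixed $k$, and summing these inequalities over all $k$ gives
\[
p_{n+1}=\sum_{k\ge 0}p_{n+1,k}\ \le\ \sum_{k\ge 0}p_{n,k}=p_n ,
\]
which is precisely the assertion. Because both sums are finite, there is no convergence subtlety and the passage from the termwise inequalities to the inequality of sums is automatic.

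The step I expect to require the most care is the behaviour at the \emph{largest} rank that can occur. A tree on $n+1$ vertices can have a balanced root of rank $n$ (a path), a rank that is unavailable at size $n$; this is the top term of the sum for $p_{n+1}$ and it has no positive partner in the sum for $p_n$. So the whole reduction hinges on applying Lemma \ref{fixedk} across the entire range of contributing ranks, including this boundary value, and on checking that the domination there points in the right direction rather than being an artifact of the convention $p_{0,i}=1$. If one prefers to avoid relying on the lemma at the extreme rank, an alternative is to mimic the \emph{proof} of Lemma \ref{fixedk} at the level of $p_n$ itself: writing $g(a,b)=\sum_{r}p_{a,r}p_{b,r}$ for the probability that two independent trees of sizes $a$ and $b$ are both balanced of a common rank, one has $p_n=\tfrac1n\sum_{i=0}^{n-1}g(i,n-1-i)$, and the termwise lemma makes $g$ weakly decreasing in each argument, so the same ``delete the smallest summand and match the rest'' strategy used for fixed $k$ yields $p_{n+1}\le p_n$ directly.
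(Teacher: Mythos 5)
There is a genuine gap, and it sits exactly where you suspected. Your main argument needs the termwise inequality $p_{n+1,k}\le p_{n,k}$ for every rank $k$ that contributes to $p_{n+1}$, in particular for $k=n$. But $p_{n,n}=0$ (a root of rank $n$ forces at least $n+1$ vertices), while $p_{n+1,n}=2^n/(n+1)!>0$ (the $2^n$ decreasing paths on $n+1$ vertices), so the domination fails at the top rank; already $p_{2,1}=1>0=p_{1,1}$. Although Lemma \ref{fixedk} is stated for all $k\le n$, it cannot hold at $k=n$ for this reason, and the paper's own proof of the corollary only invokes it for $k\le n-2$. You flagged this boundary as the delicate point but left the check undone; had you carried it out, you would have found that the inequality there points the wrong way. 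Your fallback is also unsound: the function $g(a,b)=\sum_r p_{a,r}p_{b,r}$ is \emph{not} weakly decreasing in each argument. For instance $g(1,2)=p_{1,0}p_{2,0}+p_{1,1}p_{2,1}=0$ while $g(2,2)=p_{2,1}^2=1$, so $g(2,2)>g(1,2)$; the same top-rank term that breaks the first route breaks this one.

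What is missing is a separate, quantitative treatment of the top ranks. The paper computes $p_{n,n-1}=2^{n-1}/n!$ (the decreasing paths on $n$ vertices), $p_{n+1,n-1}=2^{n-1}/(n+1)!$ and $p_{n+1,n}=2^n/(n+1)!$, and observes that for $n\ge 2$
\[
p_{n,n-1}=\frac{2^{n-1}}{n!}\ \ge\ \frac{3\cdot 2^{n-1}}{(n+1)!}=p_{n+1,n-1}+p_{n+1,n},
\]
so the slack in the rank-$(n-1)$ comparison absorbs the new rank-$n$ term; combining this with Lemma \ref{fixedk} for $k\le n-2$ gives $p_{n+1}\le p_n$. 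Some such explicit input at the boundary is unavoidable, and your proposal does not supply it.
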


\begin{proof} It follows from our definitions that
$p_n=\sum_{k=1}^{n-1} p_{n,k} $ and $p_{n+1}=\sum_{k=1}^{n} p_{n+1,k} $.  As Lemma \ref{fixedk} shows that
 $p_{n+1,k} \leq p_{n,k}$ for $k\leq n$, the only issue that we must consider is that the sum that provides $p_{n+1}$ has
one more summand than the sum that provides $p_n$. However, this is not a  problem, since for all $n\geq 2$, 
we have $p_{n,n-1} = 2^{n-1}/n!$, while $p_{n+1,n-1}=2^{n-1}/(n+1)!$ and $p_{n+1,n}=2^n/(n+1)!$, so
\[p_{n,n-1} =  \frac{2^{n-1}}{n!} \geq \frac{3\cdot 2^{n-1}}{n+1}= p_{n+1,n-1} + p_{n+1,n} . \]
This inequality, and applying Lemma \ref{fixedk} for all $k\leq n-2$, proves our claim.  
\end{proof}

\begin{proof} (of Theorem \ref{decreasing}.) Induction on $n$, the initial case of $n=1$ being obvious. In order to prove that $P_n\geq P_{n+1}$, note that a random vertex of a tree of size $n$ has $1/n$ probability to be the root. Furthermore, if  $i\in [n-1]$, then there is an $i/n^2$ probability that a random vertex of a random tree is part of the left subtree of that root, and that left subtree has $i$ vertices. Indeed, the left subtree of the root has $i$ vertices if and only if $n$ is in position $i+1$ of the corresponding permutation, and each of the $i$ vertices of that left subtree are equally likely to be chosen. The same argument applies for right subtrees. This proves that
\[P_n=\frac{p_n}{n}+\frac{2\sum_{i=1}^{n-1} iP_i}{n^2}=\frac{p_n+\sum_{i=1}^{n-1} \frac{2iP_i}{n}}{n}.\]

 Therefore, the inequality $P_n\geq P_{n+1}$ is equivalent to the inequality
\begin{equation} \label{bigone} P_n=\frac{p_n+\sum_{i=1}^{n-1} \frac{2iP_i}{n}}{n} 
\geq \frac{p_{n+1}+\sum_{i=1}^{n} \frac{2iP_i}{n+1}}{n+1}  =P_{n+1}.\end{equation}
In order to prove (\ref{bigone}), note that the first equality in (\ref{bigone}) shows that $P_n$ is obtained as the average of the $n$ summands in the numerator of the fraction that is equal to $P_n$. The average value of a set of real numbers does not change if we add the average value of the set to the set as a new element. In this case, that average value is 
$P_n$, proving that 
\begin{equation} \label{formulaforP}  P_n=\frac{p_n+P_n+ \sum_{i=1}^{n-1} \frac{2iP_i}{n}}{n+1}.\end{equation}

So (\ref{bigone}) will be proved if we can show that 
\[P_n =\frac{p_n+P_n+ \sum_{i=1}^{n-1} \frac{2iP_i}{n}}{n+1} \geq  \frac{p_{n+1}+\sum_{i=1}^{n} \frac{2iP_i}{n+1}}{n+1}  =P_{n+1}.\] Noting that $p_n\geq p_{n+1}$ by Corollary \ref{roots}, it suffices to prove that
\[ P_n+ \sum_{i=1}^{n-1} \frac{2iP_i}{n}  \geq \sum_{i=1}^{n} \frac{2iP_i}{n+1} ,\] which simplifies to the inequality
\begin{equation} \label{easier} \frac{2}{n(n-1)} \sum_{i=1}^{n-1} iP_i  \geq P_n. \end{equation}
Finally, (\ref{easier}) holds, because  the right-hand side of (\ref{formulaforP}) grows if we replace $p_n$ by $P_n$, 
(since in any given tree, the root can only be balanced if all vertices are balanced), which leads to the inequality 
\[ P_n -\frac{2}{n+1} P_n \leq  \frac{\sum_{i=1}^{n-1} \frac{2iP_i}{n}}{n+1},\]
which is clearly equivalent to (\ref{easier}). 

\end{proof}

As the sequence $P_1,P_2,\cdots $ is weakly decreasing, its limit $L$ exists. 
Note that $L\geq  \sum_{k=0}^5  c_{n,k} \approx 0.6684$ as we mentioned in Section
\ref{fixedsection}. On the other hand, the number of {\em  balanced} vertices of rank  larger than five is certainly at most
as large as the number of {\em all} vertices of rank larger than five, and it is known \cite{pittel} that the latter is about  0.00125 times 
the total number of vertices. This proves that
\[0.6684 \leq L \leq 0.66965.\]

\section{Non-plane 1-2 trees}  \label{secnonplane} 
In these trees, the vertices are still bijectively labeled by the elements of $[n]$,  each
vertex has a smaller label than its parent, and each non-leaf vertex has one or two children, but "left" and "right" do not matter anymore. See Figure \ref{fig:fivetrees} for an illustration.  It is well known \cite{anacomb} that the number of such trees is the Euler number $E_n$, and that the exponential generating function of the Euler numbers is 
\[y(x)=\sum_{n\geq 0}E_n \frac{x^n}{n!} = \tan x + \sec x.\] See sequence A000111 in the 
{\em On-line Encyclopedia of Integer Sequences} \cite{oeis} for the many occurrences of these numbers in Combinatorics. 
 
\begin{figure}
 \begin{center}
  \includegraphics[width=70mm]{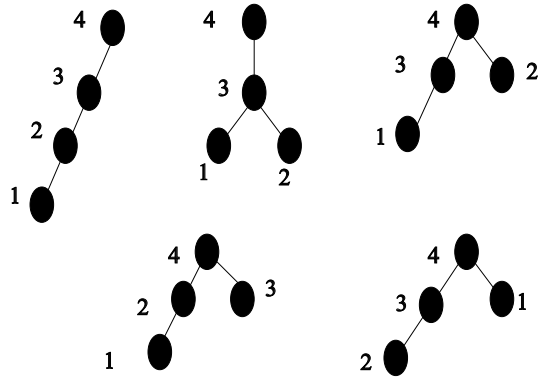}
  \caption{The five rooted non-plane 1-2 trees on vertex set $[4]$. }
  \label{fig:fivetrees}
 \end{center}
\end{figure}

Let $ \cal A_k(x)$ be the exponential generating function 
for the number  of all balanced vertices of rank $k$  in all non-plane 1-2 trees on $[n]$, and let $\cal R_k(x)$ be the exponential generating function for the number of non-plane 1-2 trees on $[n]$  in which {\em the root} is balanced and of rank $k$.

\begin{theorem} \label{eultheo}  The differential equation
 \begin{equation} \label{eulerian} \cal A_k'(x)-\cal A_k(x)y(x) = \cal R_k'(x)\end{equation} 
holds, with the initial condition $\cal A_k(0)=0$. 
\end{theorem}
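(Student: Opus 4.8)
The plan is to mimic the combinatorial decomposition used in the proof of the analogous Proposition~\eqref{recurrence} for decreasing binary trees, replacing the product-formula bookkeeping that is appropriate for \emph{plane} trees with the bookkeeping appropriate for \emph{non-plane} trees. As before, I would interpret $\mathcal{A}_k'(x)$ as the exponential generating function for the number of ordered pairs $(v,T)$, where $T$ is a non-plane $1$-$2$ tree on $[n]$ with a distinguished balanced vertex $v$ of rank $k$, but with the root relabeled away. Concretely, I would view $\mathcal{A}_k'(x)$ as enumerating such marked structures on $[n-1]$ obtained by deleting the root of a structure on $[n]$; the key is to split these according to whether the marked vertex $v$ is the root itself or lies in a subtree hanging off the root.

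Next I would carry out the case analysis on the position of the marked vertex $v$. If $v$ is the root, deleting the root leaves exactly the structures enumerated by $\mathcal{R}_k'(x)$, which accounts for the right-hand side of \eqref{eulerian}. If $v$ is not the root, then deleting the root of $T$ breaks $T$ into either one or two subtrees (since each non-leaf vertex has one or two children), and the marked balanced vertex of rank $k$ lives in exactly one of them. The essential point is that when the root has two children, the two subtrees are an \emph{unordered} pair rather than an ordered one, because "left" and "right" no longer matter. This is precisely the combinatorial feature that removes the factor of $2$ present in the binary-tree computation and replaces the binary-tree operator $\frac{2}{1-x}$ with the operator "$\cdot\, y(x)$," where $y(x)=\tan x+\sec x$ is the exponential generating function counting all non-plane $1$-$2$ trees.

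The main obstacle, and the step I would treat most carefully, is justifying the precise form of the multiplier $y(x)$ and the absence of any extra symmetry factor, rather than the overall skeleton of the argument. When $v$ lies in a subtree below the root, that subtree is one non-plane $1$-$2$ tree carrying the marked vertex (enumerated by $\mathcal{A}_k(x)$), while the remaining part attached to the root is either empty (the root had only one child, namely the one containing $v$) or a single other non-plane $1$-$2$ tree (the root had two children). Summing the "empty other subtree" contribution with the "one other subtree" contribution, and recalling that the constant term $E_0=1$ of $y(x)$ accounts exactly for the empty case while the higher terms account for a genuine sibling subtree, I would show that the total contribution of the $v\neq\text{root}$ case is $\mathcal{A}_k(x)\,y(x)$. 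The delicate points are (i) verifying that the unordered nature of the two children produces no factor of $2$ and no over- or under-counting once the marked subtree is distinguished from the sibling, and (ii) checking that the relabeling of the $n$ remaining vertices by the integers of an $(n-1)$-element set is handled correctly by the exponential-generating-function product formula, exactly as in the binary case but now with the non-plane weighting built into $y(x)$.

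Finally, I would assemble the two cases to obtain $\mathcal{A}_k'(x)=\mathcal{A}_k(x)\,y(x)+\mathcal{R}_k'(x)$, which is \eqref{eulerian}, and note that the initial condition $\mathcal{A}_k(0)=0$ holds because there are no vertices, hence no balanced vertices, in the empty tree. The structure of the proof is thus parallel to that of the decreasing binary tree case; the only genuinely new ingredient is recognizing that the correct operator for non-plane $1$-$2$ trees is multiplication by $y(x)=\tan x+\sec x$ in place of $\frac{2}{1-x}$.
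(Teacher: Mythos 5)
Your proposal is correct and follows essentially the same decomposition as the paper's proof: remove the root, split according to whether the marked balanced vertex is the root (giving $\mathcal{R}_k'(x)$) or lies in a subtree (giving $\mathcal{A}_k(x)\,y(x)$ via the product formula, with the empty-sibling case absorbed by the constant term of $y(x)$). Your discussion of why no factor of $2$ appears is a slightly more explicit rendering of the same point the paper makes by contrasting with the plane (binary) case.
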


\begin{proof}
Let $(v,T)$ be an ordered pair in which $T$ is a non-plane 1-2 tree on vertex
set $[n]$ and $v$ is a balanced vertex of rank $k$ of $T$. Then $\cal A_k (x)$ is the exponential 
generating function counting such pairs.  Let us first assume that
$v$ is not the root of $T$,  and let us remove the root of $T$. 
On the one hand, this leaves a structure that is counted by $\cal A_k'(x)$. 
On the other hand, this leaves an ordered pair consisting of a non-plane
1-2 tree with a vertex of order $k$ marked, and a non-plane 1-2 tree. By the Product
formula of exponential generating functions, such ordered pairs are
counted by the generating function $\cal A_k(x) y(x)$. Finally, $v$ was the root of $T$, then
the root of $T$ was balanced and of rank $k$. Such trees are counted by $\cal R_k(x)$, or, after the
removal of their root, by $\cal R_k'(x)$. 
\end{proof}

Crucially, the generating function $\cal R_k(x)$, and hence, its derivative $\cal R_k'(x)$, are {\em polynomials}, which enables us to explicitly solve the linear differential equation (\ref{eulerian}). Indeed, the solution is 
\begin{equation} \label{nonplane} \cal A_k(x)=\frac{\int \cal R_k'(x)(1-\sin x) \ dx}{1-\sin x}, \end{equation}
where the integral in the numerator is an elementary function  since the integral of $x^n\sin x$ is an elementary function for all positive integers $n$. (The constant of integration is chosen so that the initial condition $\cal A_k(0)=0$ is satisfied.) Note that we are not able to count {\em all} verties of rank $k$ in a similar fashion, since the 
generating function for the number of non-plane 1-2 trees in which the root is of rank $k$ is not a polynomial, and 
the solutions analogous to (\ref{nonplane}) will not be elementary functions for $k\geq 1$. Even $\int x \tan x \ dx$ and
$\int x \sec x \ dx$ are not elementary functions. 

\subsection{The number of all vertices} So that we could compute the probability that a randomly selected vertex of a 
randomly selected non-plane 1-2 tree on $[n]$ is balanced and of rank $k$, we need to know the size of the  number $nE_n$  of {\em all} vertices in all such trees. The asymptotics of the Euler numbers are well-known (see \cite{flajolet}, for
example), but to keep the paper self-contained, we provide an argument here at the level of precision that we will need. 
See any introductory textbook on Complex analysis, such as \cite{churchill} for the relevant notions. 

As $y(x)=\tan x + \sec x$, the dominant singularities of $y(x)$ are at $x=\pi /2$ and $x=-\pi/2$, so the coefficients of
$y(x)$ are of exponential order $2/\pi$. (Note that the singularity at $x=-\pi /2$ is removable.)  However, we need a little more more precision. The following proposition will provide that. 

\begin{proposition} \label{residues}
Let $H(x)=f(x)/g(x)$ be a function so that $f(x)$ and $g(x)$ are analytic
functions, $f(x_0)\neq 0$, while $g(x)=0$ and $g'(x)\neq 0$. Then 
\[\mbox{Res H(x)} \mid_{x_0} =\frac{f(x_0)}{g'(x_0)}.\]
\end{proposition}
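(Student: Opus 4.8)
The plan is to prove the standard formula for the residue of a quotient of analytic functions at a simple pole. The statement asserts that if $H(x)=f(x)/g(x)$ with $f,g$ analytic near $x_0$, $f(x_0)\neq 0$, $g(x_0)=0$, and $g'(x_0)\neq 0$, then $H$ has a simple pole at $x_0$ with residue $f(x_0)/g'(x_0)$. First I would record that the residue is by definition the coefficient of $(x-x_0)^{-1}$ in the Laurent expansion of $H$ about $x_0$, and that for a function with a simple pole at $x_0$ this coefficient equals $\lim_{x\to x_0}(x-x_0)H(x)$. So the whole task reduces to evaluating this limit.

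To evaluate the limit, I would write
\[
(x-x_0)H(x)=(x-x_0)\frac{f(x)}{g(x)}=\frac{f(x)}{\dfrac{g(x)-g(x_0)}{x-x_0}},
\]
where I have used the hypothesis $g(x_0)=0$ to replace $g(x)$ by $g(x)-g(x_0)$ in the denominator. As $x\to x_0$, the numerator tends to $f(x_0)$ by continuity of $f$, while the denominator is exactly a difference quotient for $g$ at $x_0$ and hence tends to $g'(x_0)$, which is nonzero by hypothesis. Therefore the limit exists and equals $f(x_0)/g'(x_0)$. Since this limit is finite and nonzero (because $f(x_0)\neq 0$), the pole is genuinely simple, which retroactively justifies that the residue is given by this limit.

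An equivalent route, which I might mention or use instead, is to expand $g$ in its Taylor series: since $g(x_0)=0$ we have $g(x)=g'(x_0)(x-x_0)+O((x-x_0)^2)=(x-x_0)\bigl(g'(x_0)+O(x-x_0)\bigr)$, and since $g'(x_0)\neq 0$ the bracketed factor is a nonvanishing analytic function near $x_0$. Hence
\[
H(x)=\frac{f(x)}{(x-x_0)\bigl(g'(x_0)+O(x-x_0)\bigr)},
\]
and the coefficient of $(x-x_0)^{-1}$ is obtained by evaluating $f(x)/\bigl(g'(x_0)+O(x-x_0)\bigr)$ at $x=x_0$, giving $f(x_0)/g'(x_0)$ directly.

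There is no real obstacle here; this is a classical fact from a first course in complex analysis, and the only care needed is to invoke continuity of $f$ and differentiability of $g$ at the right moment and to confirm that the nonvanishing of $g'(x_0)$ keeps the denominator away from zero so the pole is simple rather than of higher order. Since the excerpt explicitly refers the reader to an introductory complex analysis text for the relevant notions, I would keep the argument short and self-contained along the lines above.
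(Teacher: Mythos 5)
Your proposal is correct: both the difference-quotient computation of $\lim_{x\to x_0}(x-x_0)H(x)$ and the alternative factorization $g(x)=(x-x_0)\bigl(g'(x_0)+O(x-x_0)\bigr)$ are sound, and the Taylor-expansion route properly establishes that the pole is simple before the limit is identified with the residue. The paper itself gives no proof of this proposition at all, deferring to an introductory complex analysis text, so there is nothing to compare against; your argument is the standard one and would serve as a complete justification.
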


We can apply Proposition \ref{residues} to $y(x)$ at $x_0=\pi/2$ with $f(x)=1+\sin x$ and $g(x)=\cos x$ if we note that
$y(x)=\frac{1+\sin x}{\cos x}$. Then  Proposition \ref{residues} implies
that $\hbox{Res } y(x) \Bigr|_{\pi/2}=\frac{2}{-1}=-2$. The singularity of $y$ at $x=-\pi/2$ is removable, 
since $\lim_{x\rightarrow -\pi/2} y(x) =0$ exists, so $\hbox{Res } y(x) \Bigr|_{-\pi/2}=0$.

Now observe that
\begin{eqnarray} \frac{R}{x-a} & = & \frac{R}{-a} \cdot \frac{1}{1-\frac{x}{a}}
\\
& = &  \frac{R}{-a} \sum_{n\geq 0} \frac{x^n}{a^n}.
\end{eqnarray} 

Applying this  to $y(x)$ with $a=\pi/2$ and $R=-2$, we get that the dominant
term of $y(x)$ is of the form $\frac{4}{\pi}\sum_{n\geq 0} x^n (2/\pi)^n$, so 
\begin{equation}
\label{eulerprecise}
\frac{E_n}{n!} \sim \frac{4}{\pi} \cdot  \left(\frac{2}{\pi} \right)^n .
\end{equation}

So the total number of all vertices in all non-plane 1-2 trees of size $n$ is 
\begin{equation} \label{eulerprecise1} nE_n \sim  n! \left ( n \frac{4}{\pi}  \cdot   \left(\frac{2}{\pi} \right)^n  \right).\end{equation} 

\subsection{Leaves} \label{leaves}

Let $\cal A_0(x)$ denote exponential generating function for the total number of {\em leaves} in all non-plane 1-2 
trees on vertex set $[n]$. It is then easy to verify that $\cal A_0(x)=x+\frac{x^2}{2}+2\frac{x^3}{6}+
9\frac{x^4}{24}+\cdots $.

\begin{theorem} The equality 
\begin{equation} \label{formofb} \cal A_0(x)=\frac{x-1+\cos x}{1-\sin x}
\end{equation} holds.
\end{theorem}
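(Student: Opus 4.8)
The plan is to treat the count of leaves as the rank-zero special case of the machinery already set up in Theorem \ref{eultheo}. A leaf is precisely a balanced vertex of rank $0$ (every descending path from it to a leaf has length $0$), so $\mathcal{A}_0(x)$ is exactly the function $\mathcal{A}_k(x)$ of that theorem when $k=0$. Consequently it obeys the differential equation (\ref{eulerian}), which for $k=0$ reads $\mathcal{A}_0'(x)-\mathcal{A}_0(x)y(x)=\mathcal{R}_0'(x)$ with $\mathcal{A}_0(0)=0$, and its solution is given by the explicit formula (\ref{nonplane}).

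The one input I still need is $\mathcal{R}_0(x)$, the exponential generating function for those non-plane 1-2 trees whose root is balanced of rank $0$. The root has rank $0$ exactly when it is itself a leaf, and a labeled rooted tree in which the root is a leaf must be the single-vertex tree. Hence $\mathcal{R}_0(x)=x$ and $\mathcal{R}_0'(x)=1$.

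Substituting $\mathcal{R}_0'(x)=1$ into (\ref{nonplane}) gives
\[
\mathcal{A}_0(x)=\frac{\int (1-\sin x)\,dx}{1-\sin x}=\frac{x+\cos x+C}{1-\sin x},
\]
where the constant $C$ is fixed by the requirement $\mathcal{A}_0(0)=0$. Evaluating the numerator at $x=0$ forces $1+C=0$, so $C=-1$ and $\mathcal{A}_0(x)=\frac{x-1+\cos x}{1-\sin x}$, which is exactly (\ref{formofb}).

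There is no genuine analytic obstacle: once Theorem \ref{eultheo} is available, the entire argument is a one-line integration, with the integrating factor $1-\sin x$ being precisely the one already recorded in (\ref{nonplane}). The only step that deserves care is the combinatorial claim $\mathcal{R}_0'(x)=1$, that is, the observation that the single-vertex tree is the unique non-plane 1-2 tree whose root is balanced of rank $0$. As an independent check I would expand the right-hand side of (\ref{formofb}) as a power series and compare its coefficients with the number of leaves in the small non-plane 1-2 trees counted directly.
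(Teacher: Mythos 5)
Your proof is correct and follows exactly the paper's route: specialize Theorem \ref{eultheo} to $k=0$ with $\mathcal{R}_0(x)=x$, and solve the resulting linear ODE via the integrating factor $1-\sin x$ as in (\ref{nonplane}). The only difference is that you write out the one-line integration and the determination of the constant explicitly, which the paper leaves implicit.
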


\begin{proof} Let us apply Theorem \ref{eultheo} with $k=0$, to get the linear differential equation 
\[ \cal A_0'(x)-\cal A_0(x)y(x) = 1.\] Indeed, $R_0(x)=x$, since the only tree whose root is balanced and of rank 0 is
the one-vertex tree. Recalling that $y(x)=\tan x + \sec x$, and the initial condition $\cal A_0(0)=0$, we can solve the
last displayed linear differential equation to get what was to be proved. 
\end{proof}

Note that $x=\pi /2$ is the unique nonremovable singularity of smallest modulus of $\cal A_0(x)$, and that at that point, $\cal A_0(x)$ has 
pole of order two, since $(1-\sin x)' =-\cos x $ also has a zero at that point. Therefore, we cannot apply Proposition 
\ref{residues} directly. Instead, we use the following lemma. 

\begin{lemma} \label{doubleres}
Let $H(x)=\frac{f(x)}{g(x)}$ be a function so that $f$ and $g$ are
analytic functions, $f(x_0)\neq 0$, 
while $g(x_0)=g'(x_0)=0$, and $g''(x)\neq 0$. 
Then 
\[H(x)=\frac{2f(x_0)}{g''(x_0)} \cdot \frac{1}{(x-x_0)^2} + \frac{h_{-1}}{x-x_0}
+h_0+\cdots .\]
\end{lemma}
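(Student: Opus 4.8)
The plan is to extract the Laurent expansion of $H(x)=f(x)/g(x)$ at the double zero $x_0$ of $g$ by computing the leading coefficient directly from Taylor data, then invoking uniqueness of Laurent series to write down the remaining terms.

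First I would Taylor-expand $g$ about $x_0$. Since $g(x_0)=g'(x_0)=0$ but $g''(x_0)\neq 0$, we have
\[
g(x)=\frac{g''(x_0)}{2}(x-x_0)^2+\frac{g'''(x_0)}{6}(x-x_0)^3+\cdots
=\frac{g''(x_0)}{2}(x-x_0)^2\left(1+\frac{g'''(x_0)}{3g''(x_0)}(x-x_0)+\cdots\right).
\]
The bracketed factor is analytic and equals $1$ at $x_0$, so it is invertible in a punctured neighborhood, and $g$ has a zero of order exactly two at $x_0$. Meanwhile $f$ is analytic with $f(x_0)\neq 0$, so $f(x)=f(x_0)+f'(x_0)(x-x_0)+\cdots$. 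Dividing, $H(x)$ has a pole of order exactly two at $x_0$, so its Laurent expansion there begins at $(x-x_0)^{-2}$:
\[
H(x)=\frac{h_{-2}}{(x-x_0)^2}+\frac{h_{-1}}{x-x_0}+h_0+\cdots.
\]

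Next I would pin down $h_{-2}$. Multiplying through by $(x-x_0)^2$ gives the analytic function
\[
(x-x_0)^2 H(x)=\frac{f(x)}{g(x)/(x-x_0)^2},
\]
and $h_{-2}$ is the value of this at $x_0$, i.e. the limit as $x\to x_0$. From the factorization above, $g(x)/(x-x_0)^2\to g''(x_0)/2$, while $f(x)\to f(x_0)$, so
\[
h_{-2}=\lim_{x\to x_0}(x-x_0)^2 H(x)=\frac{f(x_0)}{g''(x_0)/2}=\frac{2f(x_0)}{g''(x_0)}.
\]
Substituting this value of $h_{-2}$ into the Laurent expansion yields exactly the claimed formula, with the coefficients $h_{-1},h_0,\dots$ left unspecified as in the statement.

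The only subtle point — and the part most worth stating carefully rather than the main difficulty — is justifying that $H$ genuinely has a double pole (not a simple pole or an essential singularity) so that the Laurent series has the stated shape; this is precisely what the order-two-zero factorization of $g$ combined with $f(x_0)\neq 0$ guarantees. Everything else is the standard computation of the leading Laurent coefficient of a pole of known order, so no real obstacle arises. I would also note that the hypothesis should read $g''(x_0)\neq 0$ (the local condition at $x_0$), which is what makes the denominator $g''(x_0)$ in the formula meaningful.
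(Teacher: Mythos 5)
Your proof is correct and follows essentially the same route as the paper: both arguments factor $g(x)=q(x)(x-x_0)^2$ (you via the Taylor expansion, the paper via two differentiations to get $q(x_0)=g''(x_0)/2$) and then read off the leading Laurent coefficient $2f(x_0)/g''(x_0)$. Your observation that the hypothesis should read $g''(x_0)\neq 0$ is a fair correction of a typo in the statement.
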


\begin{proof}
The conditions directly imply that $g$ has a double root, and hence 
$H$ has a pole of order two, at $x_0$. In
order to find the coefficient that belongs to that pole, let 
$g(x)=q(x)(x-x_0)^2$. Now differentiate both sides with respect to $x$, to get
\[g''(x)=q''(x)(x-x_0)^2 + 4q'(x)(x-x_0) + 2q(x).\] Setting $x=x_0$, we
get \begin{equation} \label{fandg} g''(x_0)=2q(x_0).\end{equation}
 By our definitions, in a neighborhood of $x_0$, the function $H(x)$
behaves like
\[\frac{f(x)}{q(x)(x-x_0)^2},\] and our claim follows by (\ref{fandg}).
\end{proof}

Let $\cal A_{n,0}$ be the number of all leaves in all non-plane 1-2 trees on vertex set $[n]$.  

\begin{theorem}
The equality  \[\cal C_0:=\lim_{n\rightarrow \infty} \frac{\cal A_{n,0}}{nE_n} = 1-\frac{2}{\pi} \approx
0.3633802278\] holds. 
In other words, for large $n$, the probability that a vertex chosen uniformly at random from all vertices of all 
non-plane 1-2 trees is a leaf is $1-\frac{2}{\pi}$.
\end{theorem}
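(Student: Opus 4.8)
The plan is to extract the asymptotics of the coefficients of $\cal A_0(x)$ from its closed form $\frac{x-1+\cos x}{1-\sin x}$ and compare them with the known asymptotics $nE_n$ of the total vertex count. Since $\cal C_0=\lim_{n\to\infty}\frac{\cal A_{n,0}}{nE_n}$ and $\cal A_{n,0}/n!$ is the coefficient of $x^n$ in $\cal A_0(x)$, I would first compute the dominant behavior of $[x^n]\cal A_0(x)$ and then divide by the expression for $nE_n/n!$ obtained from \eqref{eulerprecise1}.

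First I would identify the dominant singularity of $\cal A_0(x)$, which is the double pole at $x_0=\pi/2$ where $1-\sin x$ vanishes to second order, exactly the situation covered by Lemma \ref{doubleres}. Applying that lemma with $f(x)=x-1+\cos x$ and $g(x)=1-\sin x$, I note $g'(x)=-\cos x$ and $g''(x)=\sin x$, so $g''(\pi/2)=1$ and $f(\pi/2)=\pi/2-1$. The lemma then gives the principal part of $\cal A_0$ at $\pi/2$ as
\[
\cal A_0(x)=\frac{2(\pi/2-1)}{(x-\pi/2)^2}+\frac{h_{-1}}{x-\pi/2}+\cdots,
\]
so the leading singular term is $(\pi-2)/(x-\pi/2)^2$.

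Next I would extract the coefficient asymptotics from this double pole. Expanding $\frac{1}{(x-a)^2}=\frac{1}{a^2}\cdot\frac{1}{(1-x/a)^2}=\frac{1}{a^2}\sum_{n\ge 0}(n+1)(x/a)^n$ with $a=\pi/2$, the double-pole term contributes $(\pi-2)\cdot\frac{4}{\pi^2}(n+1)(2/\pi)^n$ to $[x^n]\cal A_0(x)$, while the simple pole contributes a lower-order $O((2/\pi)^n)$ term that is negligible after dividing by $n$. Hence $\cal A_{n,0}/n!\sim (\pi-2)\cdot\frac{4}{\pi^2}\cdot n\cdot(2/\pi)^n$. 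Dividing by $nE_n/n!\sim n\cdot\frac{4}{\pi}(2/\pi)^n$ from \eqref{eulerprecise1}, the factors $n$, $4/\pi$, and $(2/\pi)^n$ cancel, leaving $\cal C_0=(\pi-2)\cdot\frac{4/\pi^2}{4/\pi}=(\pi-2)/\pi=1-\frac{2}{\pi}$, as claimed.

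The main obstacle is justifying that the double-pole term genuinely dominates: I must confirm that $\pi/2$ is the unique singularity of smallest modulus, that the other pole of $1-\sin x$ nearest the origin is farther away (indeed $1-\sin x$ next vanishes beyond $|x|=\pi/2$), and that $f(\pi/2)\neq 0$ so the pole does not degenerate. These points are already asserted in the remark preceding Lemma \ref{doubleres}, so the transfer from singular expansion to coefficient asymptotics is routine; the only care needed is verifying that the simple-pole and analytic remainders contribute a factor of lower order in $n$, which they do since a simple pole yields coefficients of order $(2/\pi)^n$ without the multiplicative $n$.
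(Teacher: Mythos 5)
Your proposal is correct and follows essentially the same route as the paper: it identifies the double pole of $\cal A_0(x)$ at $x_0=\pi/2$, applies Lemma \ref{doubleres} with $f(x)=x-1+\cos x$ and $g(x)=1-\sin x$ to get the leading coefficient $\pi-2$, converts this to coefficient asymptotics via the expansion \eqref{scaling}, and divides by \eqref{eulerprecise1}. The extra remarks on uniqueness of the dominant singularity and the negligibility of the simple-pole contribution match what the paper asserts implicitly.
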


\begin{proof}
Note that  $\cal A_0(x)$ has a unique singularity of smallest modulus, 
at $x=\pi/2$, so the exponential growth rate of  its coefficients
is $2/\pi$.  Also note that at that point, 
 the denominator of $\cal A_0(x)$ has a double root. Therefore, Lemma \ref{doubleres}
applies, with $f(x)=x-1+\cos x$ and $g(x)=1-\sin x$, yielding that the coefficient of the $(x-\pi/2)^{-2}$ term in the
Laurent series of $\cal A_0(x)$ about $x_0=\pi/2$  is 
\[2\cdot \frac{(\pi)/2 -1 +\cos (\pi/2)}{\sin(\pi/2)}=\pi-2.\]
Now observe that 
\begin{equation} \label{scaling}  \frac{D}{(x-a)^2} = \frac{D}{a^2} \cdot
 \frac{1}{(1-\frac{x}{a})^2} 
  =  \frac{D}{a^2} \cdot \sum_{n\geq 0}(n+1)\frac{x^n}{a^n}.
\end{equation}
Applying this to the dominant term of $\cal A_0(x)$ with $D=\pi-2$
and $a=\pi/2$, we get that 
\begin{equation} \label{leafprecise}
\frac{\cal A_{n,0}}{n!} \sim n (\pi -2) \cdot \left(\frac{2}{\pi}\right)^{n+2}.
\end{equation}
The proof of our claim is now immediate by comparing formulas 
(\ref{leafprecise}) and (\ref{eulerprecise1}).
\end{proof}

\subsection{Balanced vertices of rank 1}
Let $\cal A_{1}(x)$ be exponential generating function for  the total number of balanced vertices of rank 1 in all non-plane 1-2 trees on
vertex set $[n]$. Note that such vertices have only leaves as neighbors. 

\begin{theorem} The differential equation
\begin{equation} \label{formofc} \cal A_1(x)=\frac{1}{6} \cdot \frac{(3x^2+6x-6)\cos x -(6x+6)\sin x+x^3+3x^2+6}{1-\sin x}
\end{equation} holds, with the initial condition $\cal A_1(0)=0$.
\end{theorem}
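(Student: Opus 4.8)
The plan is to apply Theorem \ref{eultheo} with $k=1$, which reduces the problem to two subtasks: first computing the polynomial $\cal R_1(x)$, and then solving the resulting linear differential equation via the explicit integral formula (\ref{nonplane}). To find $\cal R_1(x)$, I would count non-plane 1-2 trees whose root is balanced of rank exactly 1. Such a tree has a root whose every descendant path to a leaf has length 1, meaning the root's children are all leaves; since each non-leaf vertex has one or two children, such a tree is either a root with a single leaf child (two vertices) or a root with two leaf children (three vertices). Because labels are decreasing and the tree is non-plane, I would carefully count the labeled structures: for two vertices there is one such tree, and for three vertices the root must be labeled $3$ with two children labeled $1$ and $2$ (unordered, since the tree is non-plane), giving one tree. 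Converting to the exponential generating function, this suggests $\cal R_1(x)=\frac{x^2}{2}+\frac{x^3}{6}$, so that $\cal R_1'(x)=x+\frac{x^2}{2}$.

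Next I would substitute into the solution formula (\ref{nonplane}), namely
\[
\cal A_1(x)=\frac{\int \cal R_1'(x)(1-\sin x)\,dx}{1-\sin x}
=\frac{\int \left(x+\tfrac{x^2}{2}\right)(1-\sin x)\,dx}{1-\sin x}.
\]
Expanding the integrand gives a sum of the elementary pieces $x$, $\frac{x^2}{2}$, $-x\sin x$, and $-\frac{x^2}{2}\sin x$. Each of these integrates in closed form: the polynomial terms give $\frac{x^2}{2}$ and $\frac{x^3}{6}$, while $\int x\sin x\,dx=\sin x-x\cos x$ and $\int x^2\sin x\,dx=2x\sin x-(x^2-2)\cos x$ by integration by parts. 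Collecting these, choosing the constant of integration so that the numerator vanishes to the correct order to enforce $\cal A_1(0)=0$, and simplifying should produce exactly the numerator displayed in (\ref{formofc}). Finally I would verify that the claimed numerator, when expanded as a power series and divided by $1-\sin x$, reproduces the correct low-order coefficients of $\cal A_1(x)$, which serves as an independent consistency check on both $\cal R_1(x)$ and the integration.

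The main obstacle I anticipate is bookkeeping rather than conceptual: getting $\cal R_1(x)$ exactly right (in particular the rank being \emph{exactly} 1 rather than at least 1, and the non-plane unordered counting of children) and then handling the constant of integration correctly so that $\cal A_1(0)=0$ holds. A subtle point is that the antiderivative is only determined up to an additive constant, and that constant must be fixed by the initial condition; since $1-\sin x\to 1$ as $x\to 0$, enforcing $\cal A_1(0)=0$ amounts to requiring the numerator to vanish at $x=0$, which pins down the constant. I would therefore treat the final simplification carefully, combining the $\cos x$ terms, the $\sin x$ terms, and the polynomial terms separately, and checking that the coefficients match the stated form $\frac{1}{6}\big[(3x^2+6x-6)\cos x-(6x+6)\sin x+x^3+3x^2+6\big]$ before concluding.
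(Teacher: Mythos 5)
Your proposal is correct and follows essentially the same route as the paper: apply Theorem \ref{eultheo} with $k=1$, identify $\cal R_1(x)=\frac{x^2}{2}+\frac{x^3}{6}$ from the two trees (on two and three vertices) whose root is balanced of rank 1, and solve the resulting linear differential equation with $\cal A_1(0)=0$. Your explicit integration and constant-of-integration check simply spell out details the paper leaves to the reader.
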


\begin{proof} Let us apply Theorem \ref{eultheo} with $k=1$, to get the linear differential equation 
\[ \cal A_1'(x)-\cal A_1(x)y(x) = x+\frac{x^2}{2}.\] Indeed, $R_1(x)=\frac{x^2}{2} + \frac{x^3}{6}$, since there are only two 
 trees whose root is balanced and of rank 1; one of them has two vertices and the other one has three vertices. Recalling that $y(x)=\tan x + \sec x$, and the initial condition $\cal A_1(0)=0$, we can solve the
last displayed differential equation to prove our claim.
\end{proof}

\begin{theorem} \label{theoc1}
The equality  \[\cal C_1 :=\lim_{n\rightarrow \infty} \frac{\cal A_{n,1}}{nE_n} = \frac{\pi^2}{24} + \frac{\pi}{4}-1 \sim 
0.1966316804\]
holds. 
In other words, for large $n$, the probability that a vertex chosen uniformly at random from all vertices of all 
non-plane 1-2 trees is balanced and of rank 1 is $ \frac{\pi^2}{24} + \frac{\pi}{4}-1$.
\end{theorem}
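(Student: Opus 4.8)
The plan is to follow exactly the pattern established in the previous theorem about leaves, since the structure of the problem is identical. I am given the closed form of $\cal A_1(x)$ in equation (\ref{formofc}), namely a rational-trigonometric expression with denominator $1-\sin x$, and I must extract the asymptotics of its coefficients $\cal A_{n,1}/n!$ and then divide by the known asymptotics (\ref{eulerprecise1}) of $nE_n$ to obtain the limiting probability $\cal C_1$.

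First I would observe that $\cal A_1(x)$ has its unique nonremovable singularity of smallest modulus at $x=\pi/2$, coming from the zero of $1-\sin x$, so the exponential growth rate of the coefficients is again $2/\pi$. Since $(1-\sin x)$ has a double root at $\pi/2$ (as $-\cos x$ also vanishes there), I would apply Lemma \ref{doubleres} with $f(x)$ equal to the numerator $\frac{1}{6}\left((3x^2+6x-6)\cos x-(6x+6)\sin x+x^3+3x^2+6\right)$ and $g(x)=1-\sin x$. The key computation is evaluating the coefficient $\frac{2f(\pi/2)}{g''(\pi/2)}$ of the $(x-\pi/2)^{-2}$ term. I would compute $f(\pi/2)$ by substituting $\cos(\pi/2)=0$ and $\sin(\pi/2)=1$ into the numerator, which collapses it to a polynomial-in-$\pi$ expression, and note that $g''(x)=\sin x$ gives $g''(\pi/2)=1$. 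This yields the dominant Laurent coefficient $D=2f(\pi/2)$.

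Next I would apply the scaling identity (\ref{scaling}) to the dominant term $\frac{D}{(x-\pi/2)^2}$ with $a=\pi/2$, exactly as in the leaf proof, to conclude that $\frac{\cal A_{n,1}}{n!}\sim \frac{D}{(\pi/2)^2}\,(n+1)\,(2/\pi)^n$, which to leading order is $n\cdot\frac{4D}{\pi^2}\cdot(2/\pi)^n$. Dividing this by (\ref{eulerprecise1}), namely $nE_n\sim n!\,n\,\frac{4}{\pi}(2/\pi)^n$, the factors of $n$, $n!$, and $(2/\pi)^n$ cancel, leaving $\cal C_1$ as a clean ratio of constants. I expect the arithmetic to simplify to $\frac{\pi^2}{24}+\frac{\pi}{4}-1$; verifying that the explicit value of $f(\pi/2)$ produces precisely these three terms is the one place where a sign slip or a factor of $\frac{1}{6}$ could go wrong, so that substitution is the main obstacle, though it is purely mechanical rather than conceptual.

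I should note that this proof is genuinely routine given the machinery already in place: the only new content relative to the leaf case is the specific numerator $f$, and hence the specific value of the dominant Laurent coefficient. No new singularity analysis, no new lemma, and no new asymptotic input is required beyond (\ref{eulerprecise1}), Lemma \ref{doubleres}, and the scaling formula (\ref{scaling}). Thus I would keep the exposition parallel to the previous theorem so that the reader sees immediately that it is the same argument with a different constant.
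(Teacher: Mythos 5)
Your proposal is correct and follows essentially the same route as the paper: identify the double pole of $\cal A_1(x)$ at $x=\pi/2$, apply Lemma \ref{doubleres} to get the dominant Laurent coefficient $D=\frac{\pi^3}{24}+\frac{\pi^2}{4}-\pi$ (your normalization, placing the $\frac16$ in $f$ rather than in $g$, gives the same $D$), then use the scaling identity \eqref{scaling} and compare with \eqref{eulerprecise1}. The substitution you deferred does indeed yield $f(\pi/2)=\frac16\left(\frac{\pi^3}{8}+\frac{3\pi^2}{4}-3\pi\right)$ and hence the stated value of $\cal C_1$.
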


\begin{proof}
Note that $\cal A_1(x)$ has a unique singularity of smallest modulus at $x=\pi/2$, and that that singularity is a pole of order two. Therefore, we can apply Lemma \ref{doubleres} with $f(x)=3x^2+6x-6)\cos x -(6x+6)\sin x+x^3+3x^2+6$ and
$g(x)=6(1-\sin x)$. At $x_0=\pi /2$, this yields $f(x_0)= \frac{\pi^3}{8} + \frac{3\pi^2}{4} -3\pi$. Furthermore, 
$g''(x)=6\sin x$, so $g''(x_0)= 6$. Therefore, the coefficient of the $1/(x-\pi/2)^2$ term in the Laurent series of 
$\cal A_1(x)$ about $x_0=\pi/2$ is 
\[\frac{2f(x_0)}{g''(x)} = \frac{2}{6} \cdot \left (  \frac{\pi^3}{8} + \frac{3\pi^2}{4} -3\pi \right)=
\frac{\pi^3}{24} +  \frac{\pi^2}{4} -\pi .\]

Applying (\ref{scaling}) with $D=\frac{\pi^3}{24} +  \frac{\pi^2}{4} -\pi$ and $a=\pi /2$, we get that 
\begin{equation} \label{almostleafprecise}
\frac{\cal A_{n,1}}{n!} \sim n \left (\frac{\pi^3}{24} +  \frac{\pi^2}{4} -\pi \right ) \cdot \left(\frac{2}{\pi}\right)^{n+2}.
\end{equation}

We can now prove our claim by comparing formulae (\ref{eulerprecise1}) and (\ref{almostleafprecise}). 
\end{proof}

\subsection{Balanced vertices of higher rank}
For any fixed $k$, we can compute the probability that a vertex selected from all vertices of all non-plane 1-2 trees uniformly at random is balanced and of rank $k$. For instance, for $k=2$, we get that 
\[\lim_{n\rightarrow \infty} \frac{\cal A_{n,2}}{nE_n}=
\frac{\pi^6}{32256} +\frac{\pi^5}{2304} -7\frac{\pi^4}{1920}-3\frac{\pi^3}{64} +\frac{\pi^2}{3} +9\frac{\pi}{4}-8
-\frac{2}{\pi} \approx 0.0759013197 ,\]
where $\cal A_{n,2}$ denotes the number of balanced vertices in all non-plane 1-2 trees on $[n]$. 

It is a direct consequence of (\ref{nonplane}), the well-known fact that (easy to prove by induction) 
that \[\int x^n \sin x \ dx = H(x)\sin x +I(x) \cos x,\] for some polynomials $H(x)$ and $I(x)$, and Lemma \ref{doubleres}
that for all positive integers $n$, there exists a polynomial $ J_n$ with rational coefficients so that 
\[\cal C_k:=\lim_{n\rightarrow \infty} \frac{\cal A_{n,k}}{nE_n}= \frac{J_n(\pi)}{\pi} .\]

\section{Plane 1-2 trees}
Plane 1-2 trees are similar to non-plane 1-2 trees, except that the children of each vertex are linearly ordered, left to right.
The difference between plane 1-2 trees and decreasing binary trees is that in plane 1-2 trees, if a vertex has only one 
child, then that child has no "direction", that is, it is not a "left child" or a "right child".  See Figure \ref{plane1-2} for an 
illustration. 

\begin{figure}
 \begin{center}
  \includegraphics[width=70mm]{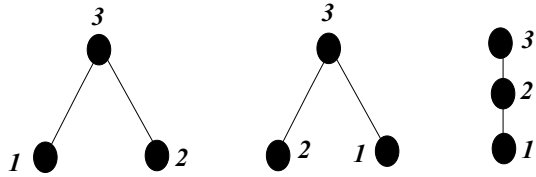}
  \caption{The three rooted plane 1-2 trees on vertex set $[3]$. }
  \label{plane1-2}
 \end{center}
\end{figure}

So plane 1-2 trees are "in between" decreasing binary trees (where left or right matters for every child) and non-plane
1-2 trees (where left or right does not matter for any vertex). Indeed, in plane 1-2 trees, left or right matters, except for
vertices that have no siblings.  

Let $Z(x)=\sum_{n\geq 0}z_n\frac{x^n}{n!}$ be the exponential generating function for the number of plane 1-2 trees on vertex set $[n]$. Then $Z(x)$ satisfies the differential
equation \begin{equation} \label{eqforz} Z'(x)=Z^2(x) - Z(x)+1,\end{equation}
 with initial condition $Z(0)=1$. Indeed, removing the root of a plane 1-2 tree $T$ that has more than one vertex, that tree
 falls apart to the ordered set of two such trees, except when the root of $T$ has only one child. 

See sequence A080635 in \cite{oeis} for many other combinatorial problems whose solution involves the power series
$Z(x)$. 

Solving (\ref{eqforz}), we get the explicit formula
\[Z(x)=\frac{\sqrt{3}}{2} \tan \left ( \frac{\sqrt{3}}{2}x+\frac{\pi}{6} \right  ) +\frac{1}{2} .\]
Noting that $\tan a = \sin a / \cos a$, and that the summand $1/2$ at the end does not influence the growth rate
of the coefficients of $Z(x)$, we can proceed as in Section \ref{secnonplane}. That is, we can use Proposition 
\ref{residues} to compute that the number $z_n$ of plane 1-2 trees on vertex set $[n]$ satisfies
\[z_n \sim n!  \left( \frac{3\sqrt{3}} {2\pi} \right)^{n+1} .\]
     Therefore, the total number of all vertices of all such trees satisfies 
\begin{equation} \label{allvert} nz_n \sim n! \cdot n  \left( \frac{3\sqrt{3}} {2\pi} \right)^{n+1} . \end{equation}

Let $ \mathbf A_k(x)$ be the exponential generating function 
for the number  of all balanced vertices of rank $k$  in all plane 1-2 trees on $[n]$, and let $\mathbf R_k(x)$ be the exponential generating function for the number of plane 1-2 trees on $[n]$  in which {\em the root} is balanced and of rank $k$. 

\begin{theorem} \label{planetheo}  The differential equation
 \begin{equation} \label{plane} \mathbf A_k'(x)=2\mathbf A_k(x)Z(x) - \mathbf A_k(x) + \mathbf R_k'(x)\end{equation} 
holds, with the initial condition $\mathbf A_k(0)=0$. 
\end{theorem}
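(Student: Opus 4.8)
The plan is to mirror the proofs of the recurrence (\ref{recurrence}) for decreasing binary trees and of Theorem \ref{eultheo} for non-plane 1-2 trees, adapting the root-removal decomposition to reflect the combinatorial structure encoded in (\ref{eqforz}). As before, I would first observe that $\mathbf A_k(x)$ is the exponential generating function enumerating ordered pairs $(v,T)$, where $T$ is a plane 1-2 tree on $[n]$ and $v$ is a balanced vertex of rank $k$ in $T$; equivalently, it counts plane 1-2 trees with a balanced vertex of rank $k$ marked. The proof then splits according to whether the marked vertex $v$ is the root of $T$ or not. If $v$ is the root, then the root is balanced of rank $k$, so such trees are enumerated by $\mathbf R_k(x)$, and after removing the root by $\mathbf R_k'(x)$; this accounts for the final term on the right-hand side of (\ref{plane}).

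The substance of the argument is the case in which $v$ is not the root. Here I would remove the root of $T$, which turns the left-hand side into $\mathbf A_k'(x)$ and, following the decomposition behind (\ref{eqforz}), leaves one of three configurations: the root had two children, leaving an ordered pair of nonempty plane 1-2 trees; the root had a single (undirected) child, leaving one nonempty plane 1-2 tree; or the root was a leaf, leaving nothing. Since $v$ is not the root, it survives in one of the remaining subtrees, so the leaf case contributes nothing. In the two-children case, the subtree containing $v$ is a plane 1-2 tree with a marked balanced vertex of rank $k$, counted by $\mathbf A_k(x)$, while the sibling subtree is an arbitrary nonempty plane 1-2 tree, counted by $Z(x)-1$; because left and right are distinguished, there are two ways to place the marked subtree, giving $2\mathbf A_k(x)\bigl(Z(x)-1\bigr)$. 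In the single-child case, the unique child subtree must be the one containing $v$, and since that child has no direction there is no factor of two, contributing $\mathbf A_k(x)$. Adding these and simplifying,
\[
2\mathbf A_k(x)\bigl(Z(x)-1\bigr) + \mathbf A_k(x) = 2\mathbf A_k(x)Z(x) - \mathbf A_k(x),
\]
which together with the root term yields exactly (\ref{plane}); the initial condition $\mathbf A_k(0)=0$ is immediate, since the empty tree has no marked vertices.

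The step I expect to require the most care is the bookkeeping that distinguishes plane 1-2 trees from decreasing binary trees. In the binary case the analogous decomposition produced a clean factor $2\mathbf A_k(x)/(1-x)$, but here the undirected single child means the one-child configuration must be handled separately and without a factor of two; it is precisely this separate contribution that produces the $-\mathbf A_k(x)$ term in (\ref{plane}). I would double-check the accounting by verifying that the same three configurations, with the marking removed, reproduce (\ref{eqforz}): the identity $(Z-1)^2+(Z-1)+1=Z^2-Z+1$ confirms that the two-children, one-child, and leaf cases are weighted correctly, which gives confidence that the marked version is likewise complete and free of overlap.
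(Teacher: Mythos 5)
Your proof is correct and follows essentially the same root-removal argument as the paper. The only cosmetic difference is bookkeeping: the paper writes the non-root contribution as $2\mathbf A_k(x)Z(x)$ (allowing an empty sibling) minus a correction $\mathbf A_k(x)$ for the overcounted empty-sibling case, whereas you separate the two-children case $2\mathbf A_k(x)\bigl(Z(x)-1\bigr)$ from the single-child case $\mathbf A_k(x)$; these are algebraically identical, and your cross-check against \eqref{eqforz} is a nice touch.
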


\begin{proof}
Let $(v,T)$ be an ordered pair in which $T$ is a  plane 1-2 tree on vertex
set $[n]$ and $v$ is a balanced vertex of rank $k$ of $T$. Then $\mathbf A_k (x)$ is the exponential 
generating function counting such pairs.  Let us first assume that
$v$ is not the root of $T$,  and let us remove the root of $T$. 
On the one hand, this leaves a structure that is counted by $\mathbf A_k'(x)$. 
On the other hand, this leaves an order pair consisting of a plane
1-2 tree with a a balanced vertex of rank $k$ marked, and a plane 1-2 tree. If the tree without the marked vertex was
not empty, then by the Product
formula of exponential generating functions, such ordered pairs are
counted by the generating function $2\mathbf A_k(x) Z(x)$, since the order of the two trees matters.
If the tree without the marked vertex was empty, then there is only one way to "order" the 1-element set of 
subtrees of the root, consisting of the subtree with the marked vertex. This results in the correction term 
$-\mathbf A_k'(x)$.

 Finally, $v$ was the root of $T$, then
the root of $T$ was balanced and of rank $k$. Such trees are counted by $\mathbf R_k(x)$, or, after the
removal of their root, by $\mathbf R_k'(x)$. 
\end{proof}

\subsection{Leaves}
Setting $k=0$ in \eqref{plane}, noting that $\mathbf A_0(0)=0$, and $\mathbf R_0'(x)=1$, then solving the resulting differential 
equation we get the following result for the number of all leaves. 
\begin{corollary} The equality 
 \begin{equation} \label{plane0} \mathbf A_0(x) = 
\frac{ \frac{\sqrt{3}}{6}\sin \left(\sqrt{3}x+\frac{\pi}{3} \right )+ \frac{x}{2} -\frac{1}{4} }
{\cos ^2 \left ( \frac{\sqrt{3}x}{2}+\frac{\pi}{6} \right) }
\end{equation} holds.
\end{corollary}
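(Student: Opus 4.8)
The plan is to recognize this as a routine first-order linear ODE and solve it by the integrating-factor method, with the explicit form of $Z(x)$ doing all the heavy lifting. First I would specialize Theorem \ref{planetheo} to $k=0$. Since the only plane 1-2 tree whose root is balanced of rank $0$ is the single-vertex tree, we have $\mathbf R_0(x)=x$ and hence $\mathbf R_0'(x)=1$, so \eqref{plane} becomes
\[\mathbf A_0'(x)=2\mathbf A_0(x)Z(x)-\mathbf A_0(x)+1,\]
with initial condition $\mathbf A_0(0)=0$. Rewriting this in standard linear form gives
\[\mathbf A_0'(x)-\bigl(2Z(x)-1\bigr)\mathbf A_0(x)=1.\]

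The key simplification is to feed in the closed form $Z(x)=\frac{\sqrt{3}}{2}\tan\!\left(\frac{\sqrt{3}}{2}x+\frac{\pi}{6}\right)+\frac{1}{2}$ recorded in the excerpt. This makes the coefficient collapse to $2Z(x)-1=\sqrt{3}\tan\!\left(\frac{\sqrt{3}}{2}x+\frac{\pi}{6}\right)$, a pure tangent term. The corresponding integrating factor is
\[\mu(x)=\exp\!\left(-\int \sqrt{3}\tan\!\left(\tfrac{\sqrt{3}}{2}x+\tfrac{\pi}{6}\right)dx\right),\]
and after the substitution $u=\frac{\sqrt{3}}{2}x+\frac{\pi}{6}$ the exponent integrates to $2\ln\bigl|\cos u\bigr|$, so that $\mu(x)=\cos^2\!\left(\frac{\sqrt{3}}{2}x+\frac{\pi}{6}\right)$. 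I would flag this as the structural sanity check: the integrating factor is exactly the denominator appearing in \eqref{plane0}, which explains the shape of the claimed answer before any constant is pinned down.

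Multiplying through by $\mu$ turns the left-hand side into $\frac{d}{dx}\bigl(\mu(x)\mathbf A_0(x)\bigr)$, so all that remains is to integrate the right-hand side $\mu(x)\cdot 1$. Using the power-reduction identity $\cos^2\theta=\tfrac12\bigl(1+\cos 2\theta\bigr)$ gives
\[\mu(x)\mathbf A_0(x)=\int \cos^2\!\left(\tfrac{\sqrt{3}}{2}x+\tfrac{\pi}{6}\right)dx=\frac{x}{2}+\frac{\sqrt{3}}{6}\sin\!\left(\sqrt{3}x+\frac{\pi}{3}\right)+C,\]
which already reproduces the numerator of \eqref{plane0} up to the constant. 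Finally I would fix $C$ from the initial condition: evaluating at $x=0$ and using $\mu(0)=\cos^2(\pi/6)=\tfrac34$ forces $C=-\tfrac14$, and dividing by $\mu(x)$ yields precisely \eqref{plane0}.

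I do not expect a genuine obstacle here, since every integral involved is elementary; the only thing demanding care is the bookkeeping with the phase-shifted trig functions and the double-angle identity, together with the correct choice of the integration constant. The one conceptual point worth stating explicitly is the coincidence (not really a coincidence) that the integrating factor equals the target denominator, which is what guarantees the final expression is a ratio of the stated form rather than something transcendental.
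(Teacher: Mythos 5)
Your proposal is correct and follows essentially the same route as the paper: specialize Theorem \ref{planetheo} to $k=0$ with $\mathbf R_0'(x)=1$, and solve the resulting linear ODE by the integrating factor $\mu(x)=\cos^2\bigl(\tfrac{\sqrt{3}}{2}x+\tfrac{\pi}{6}\bigr)$ (which the paper itself computes for general $k$ in the subsection on higher ranks). All of your intermediate computations, including the value $C=-\tfrac14$ from the initial condition, check out.
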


We can apply Lemma \ref{doubleres} to the numerator and the denominator of $\mathbf A_0(x)$ in \eqref{plane0} to  compute the growth rate of the coefficients of that power series. Indeed, $\mathbf A_0(x)$
has a unique singularity of smallest modulus at $x_0=2\pi/(3\sqrt{3})$, and that point the numerator of $\mathbf A_0(x)$
is nonzero,  the denominator, and its first derivative are zero, while the second derivative of the denominator is not 0. 
Then a computation analogous to that immediately following the proof of Theorem \ref{theoc1} shows that 
if $\mathbf a_{n,0}$ is the number of all leaves in all plane 1-2 trees on vertex set $[n]$, then 
\begin{equation} \label{lastleaves} \mathbf a_{n,0} \sim \left ( -\frac{1}{3} + \frac{4}{27}\sqrt{3} \pi \right) \cdot n  \left( \frac{3\sqrt{3}} {2\pi} \right)^{n}.\end{equation}

Comparing \eqref{lastleaves} with \eqref{allvert}, we obtain the following result. 

\begin{theorem} The equality
\[\mathbf C_0: = \lim_{n\rightarrow \infty} \frac{\mathbf a_{n,0}}{nz_n} =\frac{2}{3} - \frac{\sqrt{3}}{2\pi} \approx
0.391002219\] holds. 
\end{theorem}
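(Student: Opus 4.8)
The plan is to read off the theorem as the quotient of two singularity-analysis estimates that the surrounding text has already assembled: the leaf count \eqref{lastleaves} and the total vertex count \eqref{allvert}. Both sequences grow like $n!\cdot n\cdot(3\sqrt3/(2\pi))^{n}$ up to a constant prefactor and a bounded power of $3\sqrt3/(2\pi)$, so the limit $\mathbf C_0$ is simply the ratio of the two constant prefactors once the common $n!$, the common factor $n$, and the common exponential factors are cancelled. The only genuine arithmetic is to make sure the leftover powers of $3\sqrt3/(2\pi)$ are tracked correctly, since that is what converts the two $\pi$-laden constants into the clean value $\frac23-\frac{\sqrt3}{2\pi}$.

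To produce the numerator estimate I would apply Lemma \ref{doubleres} to \eqref{plane0} exactly as in the computation following Theorem \ref{theoc1}. The dominant singularity sits at $x_0=2\pi/(3\sqrt3)$, where the denominator $\cos^2(\sqrt3x/2+\pi/6)$ has a double zero while the numerator is nonzero. With $f$ the numerator and $g$ the denominator one checks that $\sqrt3x_0+\pi/3=\pi$, so $f(x_0)=\tfrac{x_0}{2}-\tfrac14=\tfrac{\pi\sqrt3}{9}-\tfrac14$, while $g''(x)=-\tfrac32\cos(\sqrt3x+\pi/3)$ gives $g''(x_0)=\tfrac32$. Hence the coefficient of $(x-x_0)^{-2}$ in the Laurent expansion of $\mathbf A_0$ is $D=\frac{2f(x_0)}{g''(x_0)}=\frac{4}{27}\sqrt3\,\pi-\frac13$. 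Feeding $D$ and $a=x_0$ into the transfer rule \eqref{scaling} then yields the coefficient asymptotics, the crucial point being that a pole of order two contributes the factor $a^{-(n+2)}$, i.e.\ one extra power of $3\sqrt3/(2\pi)$ beyond the simple-pole factor $a^{-(n+1)}$ that governs \eqref{allvert}.

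Finally I would divide. Writing $\mathbf a_{n,0}/n!\sim D\,n\,(3\sqrt3/(2\pi))^{n+2}$ and $nz_n\sim n!\,n\,(3\sqrt3/(2\pi))^{n+1}$, the factorials and the factors of $n$ cancel and exactly one power of $3\sqrt3/(2\pi)$ survives, so
\[
\mathbf C_0=D\cdot\frac{3\sqrt3}{2\pi}=\Bigl(\tfrac{4}{27}\sqrt3\,\pi-\tfrac13\Bigr)\frac{3\sqrt3}{2\pi}=\frac23-\frac{\sqrt3}{2\pi}.
\]
I expect the main obstacle to be purely bookkeeping: keeping the order-two pole's extra factor of $3\sqrt3/(2\pi)$ straight, since an off-by-one in that exponent would leave a spurious power of $\pi$ and destroy the decisive cancellation $\tfrac{4}{27}\sqrt3\,\pi\cdot\tfrac{3\sqrt3}{2\pi}=\tfrac23$. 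There is no analytic difficulty beyond the double-pole extraction already licensed by Lemma \ref{doubleres} and the geometric-series expansion \eqref{scaling}.
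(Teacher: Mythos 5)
Your proposal is correct and follows essentially the same route as the paper: apply Lemma \ref{doubleres} to \eqref{plane0} at $x_0=2\pi/(3\sqrt3)$, transfer via \eqref{scaling}, and divide by \eqref{allvert}. In fact your careful bookkeeping of the exponent $n+2$ (and the $n!$) is what the argument requires; the paper's displayed formula \eqref{lastleaves} omits the $n!$ and writes the exponent as $n$, which appears to be a typo, since only the $(3\sqrt3/(2\pi))^{n+2}$ version produces the stated value $\tfrac23-\tfrac{\sqrt3}{2\pi}$.
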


 In other words, the probability that a vertex selected uniformly at random from all plane 1-2 trees
of size $n$ will be a leaf is about 0.391. 

\subsection{Vertices of higher rank}
Note that remarkably, we can obtain an explicit formula for $\mathbf A_k(x)$ for every $k$. This is in contrast to 
the case when we want to compute the generating function of all vertices of a given rank, where we run into 
non-elementary functions for $k\geq 2$. 

Indeed, the standard form of (\ref{plane}) is 
\[ \mathbf A_k'(x)+(1-2Z(x)) \mathbf A_k(x)=   \mathbf R_k'(x).\]
In order to solve this linear differential equation, first we multiply both sides by the integrating factor 
\begin{eqnarray*} \mu(x) & = & \exp \left ( \int (1-2Z(x)) \ dx \right )= \exp \left ( -\ln \left ( \sec ^2 \left ( \frac{\sqrt{3}x}{2}+\frac{\pi}{6} \right) \right )\right ) \\  & = &
\cos ^2 \left ( \frac{\sqrt{3}x}{2}+\frac{\pi}{6} \right)  ,\end{eqnarray*}
which is an elementary function. 

After that multiplication, we need to integrate both sides of the obtained equation to get $\mathbf A_k(x)$. However, we
are able to do so, since on the right hand side, we have $\mu(x) \mathbf R_k'(x)$, that is, a cosine function times
a {\em polynomial} function, and it is well known that such products have elementary integrals.  

\section{Further directions}
The method that we used in this paper may well be applicable to count balanced vertices in other tree varieties, as long
as the number of children each vertex can have is {\em bounded}. 

It seems intuitively very likely that in any tree variety,   as $n$ goes to infinity, the probability that a vertex chosen 
uniformly at random from all vertices of all trees of size $n$ is balanced will be monotone decreasing. Still in the one case
where we could prove this, the case of decreasing binary trees, our proof heavily depended on the simple bijection between
these trees and permutations. New ideas are needed for other tree varieties. 

We mentioned that plane 1-2 trees are "in between" the other two tree varieties studied in this paper. So it is perhaps interesting that their vertices are {\em the most likely} to be leafs. Indeed, a random vertex of a decreasing binary
tree has a one-third chance to be a leaf, while the same probability is about 36.3 percent for non-plane 1-2 trees, and 
39.1 percent for plane 1-2 trees. Understanding this phenomenon could lead to new insights.

\end{document}